\documentclass{imanum}

\usepackage{url,color}
\usepackage{lipsum}
\usepackage{epstopdf}
\usepackage{amsmath,amssymb}
\usepackage{amsfonts}
\usepackage{algorithm}
\usepackage{algorithmic}
\usepackage{subcaption}
\usepackage{graphicx}
\usepackage[breaklinks=true,pdfstartview=FitH,colorlinks=false]{hyperref}
\AtBeginDocument{
  \label{CorrectFirstPageLabel}
  
}

\jno{drnxxx}
\received{\today}

\ifpdf
  \DeclareGraphicsExtensions{.eps,.pdf,.png,.jpg}
\else
  \DeclareGraphicsExtensions{.eps}
\fi

\def\noprint#1{}


%
%




%





\newcommand{\bfone}{\mathbf{1}}

\newcommand{\R}{\mathbb{R}}

\def\Lmax{L_{\max}}
\def\Lmin{L_{\min}}
\def\Lavg{L_{\text{avg}}}
\def\xRPCD{x_{\text{RPCD}}}
\def\rhoRPCD{\rho_{\text{RPCD}}}
\def\xCCD{x_{\text{CCD}}}
\def\rhoCCD{\rho_{\text{CCD}}}

\def\rhoRCD{\rho_{\text{RCD}}}

\newcommand{\cP}{{\mathcal P}}

\newcommand{\E}{\mathbb{E}}

\newcommand{\ddd}{\delta}

\def\tto{\;{\lower 1pt \hbox{$\rightarrow$}}\kern -10pt
           \hbox{\raise 2.8pt \hbox{$\rightarrow$}}\;}

\newcommand{\trace}{\mbox{\rm trace}}

\newcommand{\TheTitle}{Random Permutations Fix a Worst Case for Cyclic Coordinate Descent}



\begin{document}
\title{\TheTitle}
\shorttitle{\TheTitle}

\author{%
{\sc Ching-pei Lee\thanks{ching-pei@cs.wisc.edu},
and
Stephen J. Wright\thanks{swright@cs.wisc.edu}\\[2pt]
{\sc Computer Sciences Department, University of Wisconsin-Madison,
Madison, WI.\thanks{This work was
      supported by NSF Awards DMS-1216318 and IIS-1447449, ONR Award
      N00014-13-1-0129, AFOSR Award FA9550-13-1-0138, and Subcontracts
  3F-30222 and 8F-30039 from Argonne National Laboratory.}}
}
\shortauthorlist{Ching-pei Lee and Stephen J. Wright}
}
\maketitle

\begin{abstract}
{Variants of the coordinate descent approach for minimizing a
  nonlinear function are distinguished in part by the order in which
  coordinates are considered for relaxation. Three common orderings
  are cyclic (CCD), in which we cycle through the components of $x$ in
  order; randomized (RCD), in which the component to update is
  selected randomly and independently at each iteration; and
  random-permutations cyclic (RPCD), which differs from CCD only in
  that a random permutation is applied to the variables at the start
  of each cycle.  Known convergence guarantees are weaker for CCD and
  RPCD than for RCD, though in most practical cases, computational
  performance is similar among all these variants. There is a certain
  type of quadratic function for which CCD is significantly slower
  than for RCD; a recent paper by \cite{SunY16a} has explored the poor
  behavior of CCD on functions of this type.  The RPCD approach
  performs well on these functions, even better than RCD in a certain
  regime. This paper explains the good behavior of RPCD with a tight
  analysis.}
{Coordinate descent; randomization; permutations.}
\end{abstract}



\section{Introduction} \label{sec:intro}

The basic (component-wise) coordinate descent framework for the smooth
unconstrained optimization problem
\begin{equation} \label{eq:f}
\min \, f(x), \quad \mbox{where $f:\R^n \to \R$ is smooth and convex,}
\end{equation}
is shown in Algorithm~\ref{alg:cd}. Here, we denote
\begin{equation} \label{eq:notation}
\nabla_i f(x) = [\nabla f(x)]_i, \quad
e_i=(0,\dotsc,0,1,0,\dotsc,0)^T, 
\end{equation}
where the single nonzero in $e_i$ appears in position $i$.  Each outer
cycle (indicated by index $\ell$) is called an ``epoch,'' with each
epoch consisting of $n$ iterations (indexed by $j$). The counter
$k=\ell n+j$ keeps track of the total number of iterations. At each
iteration, component $i(\ell,j)$ of $x$ is selected for updating; a
step is taken along the negative gradient direction in this component
only.

\begin{algorithm} 
\begin{algorithmic}
\STATE Choose $x^0 \in \R^n$;
\FOR{$\ell=0,1,2,\dotsc$}
\FOR{$j=0,1,2,\dotsc,n-1$}
\STATE Define $k=\ell  n+j$;
\STATE Choose index $i=i(\ell,j) \in \{1,2,\dotsc,n\}$;
\STATE Choose  $\alpha_k>0$;
\STATE $x^{k+1} \leftarrow x^k - \alpha_k \nabla_{i} f(x^k) e_{i}$;
\ENDFOR
\ENDFOR
\end{algorithmic}
\caption{Coordinate Descent\label{alg:cd}}
\end{algorithm}

There are several variants within this simple framework. One important
source of variation is the choice of coordinate $i=i(\ell,j)$. Three
popular choices are as follows:
\begin{itemize}
\item CCD (Cyclic CD): $i(\ell,j)=j+1$.
\item RCD (Randomized CD, also known as Stochastic CD): $i(\ell,j)$ is
  chosen uniformly at random from $\{1,2,\dotsc,n\}$ --- 
  sampling-with-replacement.
\item RPCD (Random-Permutations Cyclic CD): At the start of epoch
  $\ell$, we choose a random permutation of $\{1,2,\dotsc,n\}$,
  denoted by $\pi_{\ell+1}$. Index $i(\ell,j)$ is chosen to be the $(j+1)$th
  entry in $\pi_{\ell+1}$. This approach represents sampling without
  replacement, within each epoch.
\end{itemize}
(Other ways to choose $i(\ell,j)$ include weighted forms of RCD, in
which $i(\ell,j)$ is selected from a nonuniform distribution; and a
Gauss-Southwell form, in which $i(\ell,j)$ is the component that
maximizes $| \nabla_i f(x^k)|$.)

When $f$ is a convex quadratic function, and when $\alpha_k$ in
Algorithm~\ref{alg:cd} is chosen to minimize $f$ exactly along each
coordinate direction, these variants are simply different variants of
the Gauss-Seidel approach for solving the equivalent system of linear
equations.

The coordinate descent approach is enjoying renewed popularity because
of its usefulness in data analysis applications. Its convergence
properties have come under renewed scrutiny. We refer to  
\citep{Wri14d} for a discussion of the state of the art as of 2015, but
make a few additions and updates here, with an emphasis on results
concerning linear convergence of the function values, by which we mean
epoch-wise convergence of the form
\begin{equation} \label{eq:lin}
f(x^{(\ell+1) n}) - f^* \le \rho (f(x^{\ell n})-f^*) \quad \mbox{for
  some $\rho \in (0,1)$},
\end{equation}
where $\rho$ is typically much closer to $1$ than to $0$, and $f^*$ is
the optimal value of \eqref{eq:f}.  For randomized methods, we
consider a corresponding expression {\em in expectation:}
\begin{equation} \label{eq:lin.e}
\E \left[ f(x^{(\ell+1) n}) - f^* \right] \le \rho \E \left[ f(x^{\ell n})-f^* \right],
\end{equation}
where the expectation is taken over all random variables encountered
in the algorithm.  When \eqref{eq:lin} holds, a reduction in function
error by a factor of $\epsilon$ can be attained in approximately
$|\log \epsilon|/(1-\rho)$ epochs. We sometimes refer to
$1/(1-\rho)$ as the ``complexity'' of an algorithm for which
\eqref{eq:lin} or \eqref{eq:lin.e} holds.

\subsection{Characterizing the Objective}

We preface a discussion of linear convergence rates with some
definitions of certain constants associated with $f$. We assume for
simplicity that the domain of $f$ is the full space $\R^n$.  The
component Lipschitz constants $L_i$, $i=1,2,\dotsc,n$ satisfy
\begin{equation} \label{eq:Li}
\left| \nabla_i f(x+te_i) - \nabla_i f(x) \right| \le L_i \left|t\right|, \quad
\mbox{for all $x\in \R^n$ and $t \in \R$.}
\end{equation}
We have
\begin{equation} \label{eq:Lmax}
\Lmax := \max_{i=1,2,\dotsc,n} \, L_i,
\quad \Lmin := \min_{i=1,2,\dotsc,n} \, L_i, \quad
\Lavg := \sum_{i=1}^n L_i/n.
\end{equation}
The standard Lipschitz constant $L$ is defined so that
\begin{equation} \label{eq:L}
\left\| \nabla f(x+d) - \nabla f(x) \right\| \le L \|d \|, \quad \mbox{for all $x,d \in \R^n$.}
\end{equation}
(Here and throughout, we use $\| \cdot \|$ to denote $\| \cdot \|_2$.)
For reasonable choices of the constants in \eqref{eq:Li},
\eqref{eq:Lmax}, and \eqref{eq:L}, the following bounds are satisfied:
\begin{equation} \label{eq:LmaxL}
1 \le \frac{L}{\Lmax} \le n.
\end{equation}

The following property of 
\cite{Loj63} is useful in
proving linear convergence:
\begin{equation}
\label{eq:strongProperty}
\| \nabla f(x) \|^2 \ge 2\mu [f(x)-f^*], \quad \mbox{for some $\mu>0$.}
\end{equation}
This property holds for $f$ strongly convex (with modulus of strong
convexity $\mu$), and for the case in which $f$ grows quadratically
with distance from a non-unique minimizing set, as in the ``optimal
strong convexity'' condition of \citet[(1.2)]{LiuW14c}. It also holds
generically for convex quadratic programs, even when the Hessians are
singular.
Further, condition \eqref{eq:strongProperty} holds for the functional
form considered by 
\cite{LuoT92a,LuoT93a}, which is
\begin{equation} \label{eq:fgE}
f(x) = g(Ex), \quad \mbox{where $E \in \R^{m \times n}$ and $g: \R^m
  \to \R$ strongly convex,}
\end{equation}
without any conditions on $E$. (For a proof, see
Appendix~\ref{app:gEx}.)  In \citep{KarNS16a}, property
\eqref{eq:strongProperty} is called the ``Polyak-{\L}ojasiewicz
condition.''

In this paper, our focus is on the case of $f$ convex quadratic, that
is,
\begin{equation} \label{eq:q}
f(x) = \frac12 x^TAx, \quad \mbox{where $A$ is symmetric positive
  semidefinite.}
\end{equation}
For this function, the  values of $L_i$, $L$, $\Lmax$, and $\mu$ are as follows:
\begin{equation} \label{eq:LmaxL.A}
\mu=\lambda_{\text{min,nz}}(A), \quad
L_i = A_{ii}; \; i=1,2,\dotsc,n; \quad \Lmax = \max_{i=1,2,\dotsc,n} A_{ii};
\quad L = \|A\|_2;
\end{equation}
where $\lambda_{\text{min,nz}}(\cdot)$ denotes the minimum nonzero
eigenvalue.  For such functions, the upper bound in \eqref{eq:LmaxL}
is achieved by $A=\bfone \bfone^T$ (where $\bfone=(1,1,\dotsc,1)^T$),
for which $L_i=1$, $i=1,2,\dotsc,n$; $\Lmax=1$; and $L =n$.

We have not included a linear term in \eqref{eq:q}, but note that
there is no loss of generality in doing so. If we were to consider instead
\[
f(x) = \frac12 x^TAx - b^Tx = \frac12 (x-x^*)^TA(x-x^*) - \frac12 b^T A^{-1}b, \quad
	\mbox{where $x^*=A^{-1} b$,}
\]
(note that $x^*$ is the minimizer of this function), the main results of
Sections~\ref{sec:comp} and ~\ref{sec:rpcd} would continue to hold,
except that in several theorems the initial iterate $x^0$ would be
replaced by $x^0-x^*$, and $f(x)$ is replaced by $f(x)-f(x^*)$.

\subsection{Linear Convergence Results for CD Variants}

\cite{LuoT92a} prove linear convergence for a function
of the form \eqref{eq:fgE}, where they require $E$ to have no zero
columns. They obtain expressions for the constant $\rho$ in
\eqref{eq:lin} for two variants of CD --- a Gauss-Southwell variant
and an ``almost cyclic'' rule --- but these constants are difficult to
characterize in terms of fundamental properties of $f$. In
\citep{LuoT93a}, the same authors analyze a family of methods
(including CD) for more general functions that satisfy a local error
bound of the form $\| x - P(x) \| \le \chi \| \nabla f(x) \|$ holds
(where $P(x)$ is the projection of $x$ onto the solution set of
\eqref{eq:f} and $\chi$ is some constant). Again, their analysis is
not clear about how the constant $\rho$ of \eqref{eq:lin} depends on
the properties of $f$.




A family of linear convergence results is proved in
\citet[Theorem~3.9]{BecT12a} for the case in which $f$ is strongly
convex (immediately extendable to the case in which $f$ satisfies the
condition \eqref{eq:strongProperty}). For constant stepsizes $\alpha_k
\equiv \alpha \le 1/\Lmax$, convergence of the form \eqref{eq:lin}
holds with
\begin{equation} \label{eq:bect.1}
\rho \leq 1- \frac{\mu}{(2/\alpha) (1+n L^2 \alpha^2)}.
\end{equation}
In particular, for $\alpha=1/L$, we have $\rho \leq
1-\mu/(2L(n+1))$. The upper bound on $\rho$ is optimized by steplength
$\alpha=1/(\sqrt{n}L)$, for which $\rho \leq 1-\mu/(\sqrt{n}L)$.  For
the case in which $f$ is a convex quadratic \eqref{eq:q} and an exact
line search is performed at each iteration (that is, $\alpha_k =
1/A_{ii}$, where $i=i(\ell,j)$ is the index to be updated in iteration
$j$ of Algorithm~\ref{alg:cd}), \citet[(3.23)]{BecT12a} show that $\rho
\leq 1-\mu/(2\Lmax(1+n^2L^2/\mu^2))$ in expression
\eqref{eq:lin}. Paradoxically, as noted by \cite{SunY16a},
use of the exact steplength leads to a considerably slower rate bound
than the conservative fixed choices. The bound for this case is
improved in \citep{SunY16a} to
\begin{equation} \label{eq:suny}
	\rho \leq 1- \max \left\{
		\frac{\mu \Lmin}{n L \Lavg},
	        \frac{\mu \Lmin}{L^2 ( 2 +\log n /\pi)^2},
		\frac{\mu \Lmin}{n^2 \Lavg^2} \right\},
\end{equation}

For the random-permutations cyclic version RPCD, the convergence
theory in \citep{BecT12a} can be applied without modification to attain
the bounds given above. As we discuss below, however, the practical
performance of RPCD is sometimes much better than these bounds would
suggest.

Convergence of the sampling-with-replacement variant RCD for
strongly convex unconstrained problems was analyzed by \cite{Nes12a}.  It
follows from the convergence theory of \citet[Theorem~2]{Nes12a} that
\eqref{eq:lin.e} holds over the i.i.d.  uniformly random choices of
indices $i(\ell,j)$ with
\begin{equation} \label{eq:rcd.rate}
\rho \le \left( 1- \frac{\mu}{n\Lmax} \right)^n \approx 1-
\frac{\mu}{\Lmax}.
\end{equation}
A different convergence rate is proved in \citet[Theorem~5]{Nes12a}, namely,
\begin{equation} \label{eq:rcd.rate.R}
\E (f(x^k) - f^*) \le C \left( 1- \frac{2 \mu}{n(\Lmax+\mu)} \right)^k,
\end{equation}
for some constant $C$ depending on the initial point. This is an
R-linear expression, obtained from Q-linear convergence of the
modified function $f(x) - f(x^*) + \sum_i L_i (x_i - x^*_i)^2/2$, where $x^*$ is the
(unique) solution of \eqref{eq:f}.  It indicates a complexity of
approximately $|\log \epsilon|(\Lmax + \mu) /(2\mu)$.

An important benchmark in studying the convergence rates of coordinate
descent is the steepest-descent (SD) method, which takes a step from
$x^k$ along all coordinates simultaneously, in the direction $-\nabla
f(x^k)$. For some important types of functions, including
empirical-risk-minimization functions that arise in data analysis, the
computational cost of one steepest-descent step is comparable to the
cost of one epoch of Algorithm~\ref{alg:cd} (see \citep{Wri14d}).
Standard analysis of steepest descent shows that fixed-steplength
variants applied to functions satisfying \eqref{eq:strongProperty}
have linear convergence of the form \eqref{eq:lin} (with one iteration
of SD replacing one epoch of Algorithm~\ref{alg:cd}) with $\rho =
1-\mu/L$. This worst-case complexity is not improved qualitatively by
using exact line searches.


In comparing convergence rates between CCD and SD (on the one hand)
and RCD (on the other hand), we see that the former tend to depend on
$L$ while the latter depends on $\Lmax$. These bounds suggest that CCD
may tend to track the performance of SD, while RCD could be
significantly better if the ratio $L/\Lmax$ is large, that is, toward
the upper end of its range in \eqref{eq:LmaxL}.  The phenomenon of
large values of $L/\Lmax$ is captured well by convex quadratic
problems \eqref{eq:q} in which the Hessian $A$ has a large
contribution from $\bfone \bfone^T$. Such matrices were used in
computations by one of the authors in 2015 (see \citep{Wri15f};
reported briefly in \citep{Wri14d}). These tests showed that on such
matrices, RCD was indeed much faster than CCD (and also SD). The
performance of RPCD was as fast as that of RCD; it did not track CCD
as the obvious worst-case analysis would suggest. Later work, reported
in \citep{Wri15g}, identified the matrix
\begin{equation} \label{eq:Ainvariant}
A := \ddd I + (1-\ddd) \bfone \bfone^T, \mbox{where $\ddd \in
  (0,n/(n-1))$,}
\end{equation}
(where $\bfone=(1,1,\dotsc,1)^T$) as being the archetype of a problem
with large ratio $L/\Lmax$. This matrix has one dominant eigenvalue
$\ddd+n(1-\ddd)$ with eigenvector $\bfone$, with the other $(n-1)$
eigenvalues equal to $\ddd$. (This matrix also has $P^TAP=A$ for all
permutation matrices $P$ --- a property that greatly simplifies the
analysis of RPCD variants, as we see below.) In \citep{Wri15g}, the
RPCD variant was shown to be significantly superior to the CCD in
computational tests.  Independently, \cite{SunY16a} studied this same
matrix \eqref{eq:Ainvariant}, using analysis to explain the practical
advantage of RCD over CCD, and showing that the performance of CCD
approaches its worst-case theoretical bound. RPCD is also studied in
\citet[Proposition~3.4, Section~C.2]{SunY16a}, the results suggesting
similar behavior for RPCD and RCD on the problem \eqref{eq:q},
\eqref{eq:Ainvariant}. However, these results are based on upper
bounds on the quantity $\| \E(x^k) \|$. By Jensen's inequality, this
quantity provides a lower bound for $\E f(x^k)$ and also for
$\E \|x^k\|^2$, but not an upper bound. (The latter is the focus of
this paper.)

The matrix \eqref{eq:Ainvariant} is also studied in \citep{YosS15a},
which investigates the tightness of the worst-case theoretical
Q-linear convergence rate for RCD applied to the problem
\eqref{eq:q}, \eqref{eq:Ainvariant} proved in \citep{Nes12a}. This
paper shows a lower bound for $O(\|\E[x^k]\|)$ for RCD, but not for
the expected objective value.

\subsection{Motivation and Outline} \label{sec:outline}

Our focus in this paper is to analyze the performance of RPCD for
minimizing \eqref{eq:q} with $A$ defined in \eqref{eq:Ainvariant}.
Our interest in RPCD is motivated by computational practice. Much has
been written about randomized optimization algorithms (particularly
stochastic gradient and coordinate descent) in recent years. The
analysis usually applies to sampling-with-replacement versions, but
the implementations almost always involve a
sampling-without-replacement scheme. The reasons are clear:
Convergence analysis is much more straightforward for sampling with
replacement, while for sampling {\em without} replacement,
implementations are more efficient, involving less data movement.
Moreover, it has long been folklore in the machine learning community
that sampling-without-replacement schemes perform better in
practice. In this paper we take a step toward bringing the analysis
into line with the practice, by giving a tight analysis of the
sampling-without-replacement scheme RPCD, on a special but important
function that captures perfectly the advantages of randomized schemes
over a deterministic scheme.

In Section~\ref{sec:comp}, we derive tools for analyzing epoch-wise
convergence of CD variants on convex quadratic problems \eqref{eq:q},
focusing on the permutation-invariant matrix \eqref{eq:Ainvariant} and
recalling results for the CCD and RCD variant in this case (obtained
from \citep{SunY16a} and \citep{Nes12a}). Section~\ref{sec:rpcd}
contains our results for RPCD applied to \eqref{eq:q} with the
permutation-invariant matrix \eqref{eq:Ainvariant}, characterizing its
convergence rate in terms of a two-parameter recurrence. The
relationship of this two-parameter sequence to the expected function
value at the end of each epoch is described in
Theorem~\ref{th:RPCD.f}. Our main result, Theorem~\ref{th:rpcd.main},
gives bounds on these two parameters in terms of $\ddd$ (the parameter
that defines \eqref{eq:Ainvariant}) and epoch number. These bounds
indicate that the convergence rate of RPCD matches that of RCD, and
both are much faster than CCD on the problem defined by \eqref{eq:q}
and \eqref{eq:Ainvariant}. We also note that a slightly tighter bound
on the asymptotic behavior of the two-variable recurrence can be
obtained from the spectral radius of the $2 \times 2$ matrix governing
this recurrence, in a regime in which $\ddd$ is close to zero. We
derive an estimate of this spectral radius in \eqref{eq:rhoM}, using
results from Appendix~\ref{app:m2}.  Theorem~\ref{th:first} explores
the behavior of the randomized methods on the very first iteration,
showing that a significant decrease can be expected just on this one
iteration. (Similar results can be expected for the cyclic variant
CCD, as we remark in comments following Theorem~\ref{th:first}.)

Empirical verification of our analysis of RPCD, and computational
comparisons with CCD and RCD, are presented in
Section~\ref{sec:computations}. The theoretical results are confirmed
nicely in all cases. We conclude with some discussions in
Section~\ref{sec:conclusions}.

\section{Convergence of CD Variants on Convex Quadratics} \label{sec:comp}

We consider the application of CCD and RPCD to the convex quadratic
problem \eqref{eq:q}.  This problem has solution $x^*=0$ with optimal
objective $f^*=0$. We assume that the matrix $A$ is diagonally scaled so
that
\begin{equation} \label{eq:diag.scaled}
A_{ii}=1, \quad \mbox{for $i=1,2,\dotsc,n$.}
\end{equation}
Under this assumption, the step of Algorithm~\ref{alg:cd} with exact
line search will have the form
\begin{equation} \label{eq:q.exact}
x^{k+1} = x^k - \frac{1}{A_{ii}} (Ax^k)_{i} e_{i} = x^k -
(Ax^k)_{i} e_{i}, \quad
\mbox{with $k=\ell n+j$ and $i=i(\ell,j)$.}
\end{equation}
Some variants of CD methods applied \eqref{eq:q} can be viewed as
Gauss-Seidel methods applied to the system $Ax=0$. Cyclic CD
corresponds to standard Gauss-Seidel, whereas RCD and RPCD are
variants of randomized Gauss-Seidel.



\subsection{CCD and RPCD Convergence Rates: General $A$} 

\label{sec:genA}

Writing $A=L+D+L^T$, where $L$ is strictly lower triangular and $D$ is
the diagonal, one epoch of the CCD method can be written as follows:
\begin{equation} \label{eq:splitC}
	x^{(\ell+1)n} = -(L+D)^{-1}(L^Tx^{\ell n}) = C x^{\ell n}, \quad \mbox{where
  $C:= -(L+D)^{-1} L^T$}.
\end{equation}
By applying the formula \eqref{eq:splitC} recursively, we obtain the
following expression for the iterate generated after $\ell$ epochs of
CCD:
\begin{equation} \label{eq:xGS}
\xCCD^{\ell n} = C^{\ell} x^0, \quad
f(\xCCD^{\ell n}) = \frac12 (x^0)^T (C^T)^{\ell} A C^{\ell} x^0.
\end{equation}
The average improvement in $f$ per epoch is obtained from the formula
\begin{equation} \label{eq:GS.av}
\rhoCCD(A,x^0) := \lim_{\ell \to \infty} \left(
f(\xCCD^{\ell n}) / f(x^0) \right)^{1/\ell}.
\end{equation}
To obtain a bound on this quantity, we denote the eigenvalues of $C$
by $\gamma_i$, $i=1,2,\dotsc,n$, and recall that the spectral radius
$\rho(C)$ is $\max_{i=1,2,\dotsc,n} \, | \gamma_i|$. Since $A$ is
positive definite, we have $\rho(C)<1$
\cite[Theorem~11.2.3]{GolV12a}. We have from Gelfand's formula
\citep{Gel41a} that
\begin{equation} \label{eq:gelfand}
\rho(C) = \lim_{\ell \to \infty} \| C^\ell \|^{1/\ell}.
\end{equation}
We can obtain a bound on $\rhoCCD(A,x^0)$ in terms of $\rho(C)$ as
follows:
\begin{align}
\nonumber
\rhoCCD (A,x^0) &:= \lim_{\ell \to \infty} \, 
\left( f(\xCCD^{\ell n})/f(x^0) \right)^{1/\ell} \\
\nonumber
&= \lim_{\ell \to \infty} \, 
\left( x_0^T (C^T)^{\ell} A C^{\ell} x_0 / x_0^T Ax_0 \right)^{1/\ell}  \\
\nonumber
&= \lim_{\ell \to \infty} \, 
\left( \| A^{1/2} C^{\ell} x_0 \|_2^2 / \| A^{1/2} x_0 \|_2^2 \right)^{1/\ell} \\
\nonumber
&= \lim_{\ell \to \infty} \, 
\left( \|( A^{1/2} C^{\ell} A^{-1/2}) (A^{1/2} x_0) \|_2^2 / \| A^{1/2} x_0 \|_2^2 \right)^{1/\ell} \\
\nonumber
&\le \lim_{\ell \to \infty} \, 
\left( \| A^{1/2} C^{\ell} A^{-1/2} \|_2^2 \right) ^{1/\ell} \\
\label{eq:fGS.av}
&\le \lim_{\ell \to \infty} \, 
\mbox{cond}(A)^{1/\ell} \| C^{\ell} \|^{2/\ell} = \rho(C)^2.
\end{align}



We can describe each epoch of RPCD algebraically by using a
permutation matrix $P_{l}$ to represent the permutation $\pi_{l}$ on
epoch $l-1$. We split the matrix $P_{l}^T A P_{l}$ and define the
operator $C_{l}$ as follows:
\begin{equation} \label{eq:Cl}
P_{l}^T A P_{l} = L_{l} + D_{l} + L_{l}^T, \quad
C_{l} := -\left(L_{l}+D_{l}\right)^{-1} L_{l}^T.
\end{equation}
The iterate generated after
$\ell$ epochs of RPCD is
\begin{equation} \label{eq:xRGS}
\xRPCD^{\ell n} = P_{\ell} C_{\ell} P_{\ell}^T P_{\ell-1} C_{\ell-1}
P_{\ell-1}^T \dotsc P_1 C_1 P_1^T x^0.
\end{equation}
(Note that in epoch $l-1$, the elements of $x$ are permuted according
to the permutation matrix $P_l$, then operated on with $C_l$ then the
permutation is reversed with $P_l^T$.)  The function value after
$\ell$ epochs is
\begin{equation} \label{eq:ffg}
f\left(\xRPCD^{\ell n}\right) = \frac12 \left(x^0\right)^T \left( 
P_1 C_1^T P_1^T  \dotsc P_{\ell} C_{\ell}^T P_{\ell}^T A
P_{\ell} C_{\ell} P_{\ell}^T \dotsc P_1 C_1 P_1^T \right) x^0.
\end{equation}
If we could take the expected value of this quantity over all random
permutations $P_1,P_2, \dotsc, P_{\ell}$, 
we would have good expected-case bounds on the convergence of
RPCD. This expectation is quite difficult to manipulate in general
(though, as we see below, is it not so difficult for $A$ defined by
\eqref{eq:Ainvariant}).
When the elements of $x^0$ are distributed according to $N(0,1)$, we
have
\begin{equation} \label{eq:ffg.x0}
\E_{x^0} f(\xRPCD^{\ell n}) = \frac12 \trace \left( 
P_1 C_1^T P_1^T  \dotsc P_{\ell} C_{\ell}^T P_{\ell}^T A
P_{\ell} C_{\ell} P_{\ell}^T \dotsc P_1 C_1 P_1^T \right).
\end{equation}

\begin{figure}\centering
\includegraphics[width=0.7\linewidth]{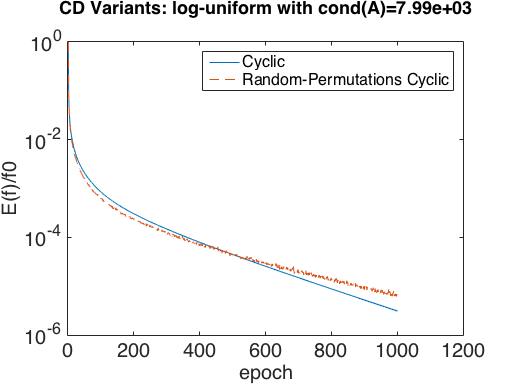}
\caption{CCD and RPCD on convex quadratic objective, for log-uniform
  eigenvalue distribution.\label{fig:lu}}
\end{figure}

Figure~\ref{fig:lu} shows typical computational results of the CCD and
RPCD variants of Algorithm~\ref{alg:cd} in the case in which the
eigenvalues of $A$ follow a log-uniform distribution, with $\kappa(A)
\approx 10^4$. The eigenvectors form an orthogonal matrix with random
orientation. Here we plot the relative {\em expected} values with
respect to $x^0$ of the $f$ on the vertical axis, that is, $\E_{x^0}
(f(x^{\ell n}))/\E_{x^0}(f(x^0))$ (see \eqref{eq:ffg.x0} for $\E_{x^0}
f(\xRPCD^{\ell n})$; similar formulas apply for $\E_{x^0}
f(\xCCD^{\ell n})$ and $\E_{x^0} f(x^0)$). This figure captures the
typical relative behavior of CCD and RPCD for ``benign'' distributions
of eigenvalues: There is little difference in performance between the
two variants.

\subsection{CD Variants Applied to Permutation-Invariant $A$} \label{sec:pinvA}

In our search for the simplest instance of a matrix $A$ for which the
superiority of randomization is observed, we arrived at the matrix
\eqref{eq:Ainvariant}.
As mentioned above, the eigenvalues of $A$ are
\[
\ddd+n(1-\ddd),\ddd,\ddd,\dotsc,\ddd, \quad \mbox{where $\ddd \in
  (0,n/(n-1))$.}
\]
The restriction in \eqref{eq:Ainvariant} ensures that $A$ has the
following properties:
\begin{itemize}
\item symmetric and positive definite;
\item unit diagonals: $A_{ii}=1$, $i=1,2,\dotsc,n$;
\item invariant under symmetric permutations of the rows and columns,
  that is, $P^TAP=A$ for any $n \times n$ permutation matrix $P$;
\item $L/\Lmax$ is close to its maximum value of $n$ when $\ddd$ is
  small, opening a wide gap between the worst-case theoretical
  behaviors of CCD and RCD.
\end{itemize}


\begin{figure}\centering
\begin{subfigure}[b]{0.45\textwidth}
\includegraphics[width=\linewidth]{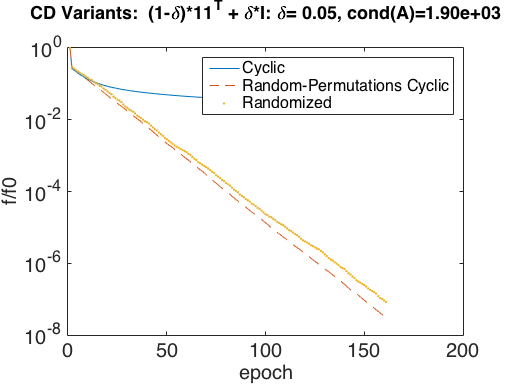}
\caption{$\ddd = .05$}
\end{subfigure}
\begin{subfigure}[b]{0.45\textwidth}
\includegraphics[width=\linewidth]{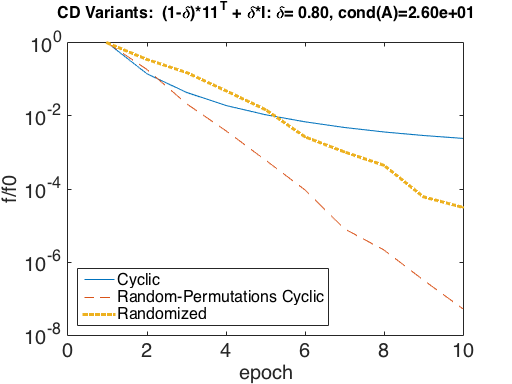}
\caption{$\ddd = .8$}
\label{subfig:deltalarge}
\end{subfigure}
\caption{CCD, RPCD, and RCD on convex quadratic objective, with $A$
	defined by \eqref{eq:Ainvariant} with
	$n=100$ and various $\ddd$.\label{fig:onebig_actual}}
\end{figure}

Figure~\ref{fig:onebig_actual} shows results for the CCD, RPCD, and
RCD variants on the matrix $A$ from \eqref{eq:Ainvariant} with $n=100$
and two different values of $\ddd$. Here, the vertical axis shows
actual function values (not expected values) relative to $f(x^0)$, for
some particular $x^0$ whose elements are drawn i.i.d from $N(0,1)$.
For both values of $\ddd$, both randomized variants are much faster
than CCD. For the larger value of $\ddd$, RPCD has a clear advantage
over RCD. Our analysis below supports these empirical observations.

We now derive expressions for the epoch iteration matrix $C$ of
Section~\ref{sec:genA} for the specific case of the
permutation-invariant matrix \eqref{eq:Ainvariant}.  This is needed
for the analysis of RPCD on this matrix.
By applying the splitting \eqref{eq:splitC} to \eqref{eq:Ainvariant},
we have
\begin{equation} \label{eq:LD}
D = I,\quad L = (1-\ddd)
E, \quad \mbox{where} \;\; E = \left[ \begin{matrix} 0 & 0 & 0 & \dotsc & 0 & 0 \\
1 & 0 & 0 & \dotsc & 0 & 0\\
1 & 1 & 0 & \dotsc & 0 & 0\\
\vdots & \vdots & \vdots & & \vdots & \vdots \\
1 & 1 & 1 & \dotsc & 1 & 0
\end{matrix} \right].
\end{equation}
Thus, defining
\begin{equation} \label{eq:Lbar}
\bar{L} := -(L+D)^{-1},
\end{equation}
we have
\begin{subequations}
\label{eq:Cinvariant}
\begin{align} 
\label{eq:Lbar2}
\bar{L}_{ij} &= \begin{cases}
-1 & \;\; \mbox{if $i=j$} \\
(1-\ddd)\ddd^{i-j-1} & \;\; \mbox{if $i>j$} \\
0 & \;\; \mbox{if $i<j$,}
\end{cases} \\
\label{eq:Cinv2}
 C & = (1-\ddd) \bar{L}E^T.
\end{align}
\end{subequations}
Writing $\bar{L}$ explicitly, we have
\[
\bar{L} = \left[
\begin{matrix}
-1 & 0 & 0 & 0 & \dotsc & 0 \\
(1-\ddd) & -1 & 0 & 0 & \dotsc & 0 \\
(1-\ddd)\ddd & (1-\ddd) & -1 & 0 & \dotsc &  0 \\
(1-\ddd) \ddd^2 & (1-\ddd) \ddd & (1-\ddd) & -1 & \dotsc & 0 \\
\vdots & \vdots & \vdots & & \ddots & \vdots \\
(1-\ddd) \ddd^{n-2} & (1-\ddd) \ddd^{n-3} & (1-\ddd) \ddd^{n-4} & \dotsc & \dotsc & -1 
\end{matrix}
\right].
\]
We have from \eqref{eq:Cinv2} and the properties of $E$ and $\bar{L}$ that
\[
C_{ij} = (1-\ddd) \sum_{\ell=1}^n \bar{L}_{i \ell} E_{j \ell} = (1-\ddd) \sum_{\ell=1}^{\min(i,j-1)} \bar{L}_{i \ell}.
\]
Thus for $i<j$ we have
\begin{align*}
C_{ij} &= (1-\ddd) \sum_{\ell=1}^i \bar{L}_{i\ell} \\
&= (1-\ddd) \left[ (1-\ddd) (\ddd^{i-2}+\ddd^{i-3}+ \dotsc+\ddd+1) - 1 \right] \\
&= (1-\ddd) \left[ (1-\ddd) \frac{1-\ddd^{i-1}}{1-\ddd} - 1 \right] \\
&= -(1-\ddd) \ddd^{i-1}.
\end{align*}
For the complementary case $i \ge j$, we have
\begin{align*}
C_{ij} &= (1-\ddd) \sum_{\ell=1}^{j-1} \bar{L}_{i \ell} \\
&= (1-\ddd) \left[ (1-\ddd) (\ddd^{i-2} + \ddd^{i-3} + \dotsc + \ddd^{i-j}) \right] \\
&= (1-\ddd)^2 \ddd^{i-j} (\ddd^{j-2}+\ddd^{j-3} + \dotsc + 1) \\
&= (1-\ddd)^2 \ddd^{i-j} \frac{1-\ddd^{j-1}}{1-\ddd} \\
&= (1-\ddd) \ddd^{i-j} (1-\ddd^{j-1}) \\
&= (1-\ddd) (\ddd^{i-j} - \ddd^{i-1}).
\end{align*}
To summarize, we have
\begin{equation} \label{eq:C.elts}
C_{ij} = \begin{cases}
-(1-\ddd) \ddd^{i-1} & \;\; \mbox{for $i<j$} \\
(1-\ddd) (\ddd^{i-j}-\ddd^{i-1}) & \;\; \mbox{for $i \ge j$.}
\end{cases}
\end{equation}

\subsection{Convergence Rates of CCD and RCD on the Permutation-Invariant $A$} \label{sec:ccdA}

Here, we examine the theoretical convergence rate of CCD on the
quadratic function with Hessian \eqref{eq:Ainvariant} by using the
results of \cite{SunY16a}.

Recalling the rate \eqref{eq:suny} from
\citet[Proposition~3.1]{SunY16a}, and substituting the following
quantities for  \eqref{eq:Ainvariant}:
\begin{equation}
	L = n(1 - \ddd) + \ddd, \quad \Lmin = 1,\quad 
        \Lavg=1, \quad \mu = \ddd,
\label{eq:consts}
\end{equation}
we find that
\[
\rhoCCD (\ddd,x^0) \le
1-\max \left\{ \frac{\ddd}{n (n(1-\ddd)+\ddd)}, \frac{\ddd}{(n(1-\ddd)+\ddd)^2 (2+\log n/\pi)^2}, \frac{\ddd}{n^2} \right\}.
\]
(We use $\rhoCCD(\ddd,x^0)$ in place of $\rhoCCD(A,x^0)$, to emphasize the
dependence of $A$ in \eqref{eq:Ainvariant} on the parameter $\ddd$.)
By making the mild assumption that $\ddd \le 3/4$, this expression
simplifies to
\begin{equation} \label{eq:CCDup}
\rhoCCD (\ddd,x^0) \le
1-\frac{\ddd}{n(n(1-\ddd)+\ddd)}.
\end{equation}
On the other hand, Sun and Ye show the following lower bound on
$\rhoCCD(\ddd,x^0)$ (obtained by substituting from \eqref{eq:consts}
into Theorem~3.1 of \citet{SunY16a}):
\begin{equation} \label{eq:CCDlo}
\rhoCCD (\ddd,x^0) \geq \left( 1- \frac{2 \ddd
  \pi^2}{n(n(1-\ddd)+\ddd)} \right)^2.
\end{equation}
By combining \eqref{eq:CCDup} and \eqref{eq:CCDlo}, we see that for
small values of $\ddd/n$, the average epoch-wise decrease in error is
$\rhoCCD (\ddd,x^0) = 1- c \ddd/n^2$, for some moderate value of
$c$. Classical numerical analysis for Gauss-Seidel derives similar
dependency on $n^2$ for this case from the eigenvalues of $A$, $D$,
and $L$; see \citep{SamN89a},
\citet[p.~464]{YouR71a}, and
\citet[Theorem~3.44]{Hac16a}. This dependency on $n$ is confirmed
empirically, by running CCD for $A$ with the same $\ddd$ but different
$n$, as shown in Figure~\ref{fig:CCD_different_n}.

\begin{figure}\centering
	\begin{subfigure}[b]{0.32\textwidth}
\includegraphics[width=\linewidth]{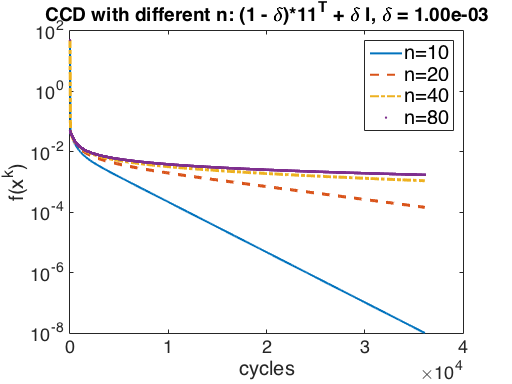}
\caption{CCD}
\label{fig:CCD_different_n}
\end{subfigure}
\begin{subfigure}[b]{0.32\textwidth}
\includegraphics[width=\linewidth]{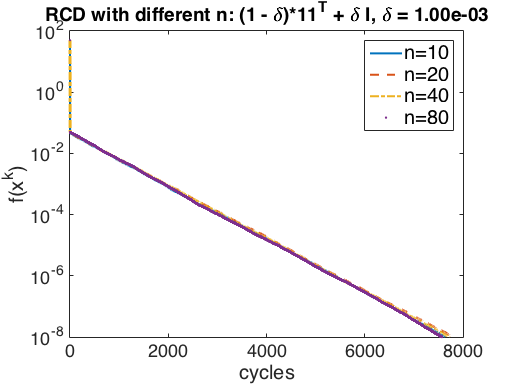}
\caption{RCD}
\label{fig:RCD_different_n}
\end{subfigure}
\begin{subfigure}[b]{0.32\textwidth}
\includegraphics[width=\linewidth]{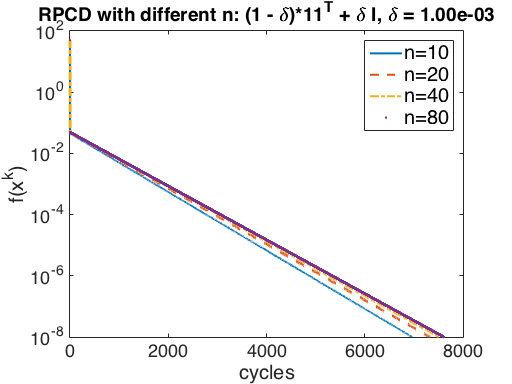}
\caption{RPCD}
\label{fig:RPCD_different_n}
\end{subfigure}
\caption{Convergence of $f$ for CCD, RCD, and RPCD applied to
  \eqref{eq:q}, \eqref{eq:Ainvariant}, with $\ddd = .001$ and $n=
  10,20,40,80$. Convergence rate of CCD deteriorates as $n$ grows, as
  predicted, while the convergence rates of RCD and RPCD are
  independent of $n$.\label{fig:different_n}}
\end{figure}

For RCD, we have by substituting the values in \eqref{eq:consts} into
\eqref{eq:rcd.rate} that the expected per-epoch improvement in error
is given by
\begin{equation}
	\label{eq:RCDrate}
	\rhoRCD(\ddd,\text{predicted}) \le \left(1 - \frac{\mu}{n
          \Lmax} \right)^n = \left(1-\frac{\ddd}{n} \right)^n
       \approx 1 - \ddd + O(\ddd^2).
\end{equation}
This result suggests that complexity of RCD is $O(n^2)$ times better
than CCD for small $\ddd$, and that its rate does not depend strongly
on $n$. This independence of $n$ is confirmed empirically by
Figure~\ref{fig:RCD_different_n}. The expression \eqref{eq:rcd.rate.R}
suggests a slightly better complexity for RCD of roughly $|\log
\epsilon| (1+\ddd) /(2 \ddd)$ epochs, rather than $| \log \epsilon|
/\ddd$ epochs, corresponding to replacing $1-\ddd$ in
\eqref{eq:RCDrate} by
\begin{equation} \label{eq:RCDrate.R}
\left(1-\frac{2 \ddd}{n(1+\ddd)} \right)^n \approx 1-\frac{2\ddd}{1+\ddd}.
\end{equation}

A kind of lower bound on the per-iterate improvement of RCD on the
problem \eqref{eq:q}, \eqref{eq:Ainvariant} can be found by setting
$x_i = (-1)^i$, $i=1,2,\dotsc,n$ with $n$ even. It can be shown that the function
values for this $x$ and the next RCD iterate $x^+$ are
\[
f(x) = \frac12 \ddd n, \quad f(x^+) = \frac12 \ddd (n-\ddd) = \left(1-\frac{\ddd}{n} \right) f(x), 
\]
regardless of the component of $x$ chosen for updating.  This
expression reveals a one-iteration improvement that matches the upper
bound \eqref{eq:rcd.rate}. However, as with some other lower-bound
examples, the longer-term behavior of the iteration sequence is more
difficult to predict. This same example provides a Q-linear rate in
the quantity $f(x) - f(x^*) + \sum_i L_i (x_i-x^*_i)^2/2$ of $(1 - 2 \ddd / (n(1 +
\ddd))$, exactly matching the upper bound of \eqref{eq:rcd.rate.R},
\eqref{eq:RCDrate.R}, proving that the R-linear rate of
\eqref{eq:rcd.rate.R} is also tight, in a sense.  A lower bound on
$\|\E (x^k)\|$ is proved in \citep{YosS15a}, but this does not
translate into a lower bound of the expected function value.

Figure~\ref{fig:RPCD_different_n} shows that RPCD too has a
convergence rate independent of $n$ on this matrix. (The performances
of RPCD and RCD are quite similar on the problems graphed.) The
convergence rate of CCD deteriorates with $n$, according to the
predictions above.

\section{Convergence of  RPCD for the Permutation-Invariant $A$} 
\label{sec:rpcd}

We now analyze the expected convergence behavior of RPCD on the convex
quadratic problem \eqref{eq:q} with permutation-invariant Hessian $A$
defined by \eqref{eq:Ainvariant}. We start by deriving a two-parameter
recurrence that captures the behavior of the method over each epoch,
and derive an estimate for the expected convergence of $f(x^{\ell n})$
to zero, as a function of these parameters. In our main results, we
analyze the rate of convergence of this sequence of parameter pairs to
zero.

\subsection{A Two-Parameter Recurrence} \label{sec:2param}

Since $A$ in \eqref{eq:Ainvariant} is invariant under symmetric
permutations, the matrices $L$ and $D$ are the same for all $P^T AP$,
where $P$ is any permutation matrix.  When considering
RPCD applied to this problem, we have in the notation of
\eqref{eq:Cl} that $C_{\ell} \equiv C$ for all $\ell$.  The
expression \eqref{eq:xRGS} simplifies as follows:
\begin{equation} \label{eq:xRGS.invariant}
\xRPCD^{\ell n} = P_{\ell} C P_{\ell}^T P_{\ell-1}
C P_{\ell-1}^T \dotsc P_1 C P_1^T x^0.
\end{equation}
The function values are
\begin{equation} \label{eq:fRGS.invariant}
f\left(\xRPCD^{\ell n}\right) = \frac12 \left(x^0\right)^T \left( P_1 C^T P_1^T
\dotsc P_{\ell} C^T P_{\ell}^T A P_{\ell} C P_{\ell}^T \dotsc P_1 C
P_1^T \right) x^0.
\end{equation}

We now analyze the expected value of the function
\eqref{eq:fRGS.invariant} obtained after $\ell$ epochs of RPCD, where
$A$ has the form \eqref{eq:Ainvariant}. Expectation is taken over the
independent permutation matrices $P_{\ell}, P_{\ell-1}, \dotsc, P_1$
in succession, followed finally by expectation over $x^0$. We define
$\bar{A}^{(t)}$, $t=0,1,2,\dotsc, \ell$ as follows:
\[
\bar{A}^{(t)} 
:= \E_{P_{\ell-t+1},\dotsc,P_{\ell}} \left(
P_{\ell-t+1} C^T P_{\ell-t+1}^T \dotsc P_{\ell} C^T P_{\ell}^T A
P_{\ell} C P_{\ell}^T \dotsc P_{\ell-t+1} C P_{\ell-t+1}^T \right),
\]
and note that $\bar{A}^{(0)} = A$ and (by comparison with
\eqref{eq:fRGS.invariant}) that 
\begin{equation} \label{eq:fRGS.x0}
\E  f\left(\xRPCD^{\ell n}\right) = \frac12 \E_{x^0} \left[
	\left(x^0\right)^T \bar{A}^{(\ell)} x^0 \right].
\end{equation}
We have the following recursive relationship between successive terms
in the sequence of $\bar{A}^{(t)}$ matrices:
\begin{equation} 
\label{eq:recursive}
\bar{A}^{(t)} = \E_{P_{\ell-t+1}} ( P_{\ell-t+1} C^T P_{\ell-t+1}^T \bar{A}^{(t-1)}
P_{\ell-t+1} C P_{\ell-t+1}^T )
= \E_{P} ( P C^T P^T \bar{A}^{(t-1)} P C
P^T ).
\end{equation}
(We can drop the subscript on $P_{\ell-t+1}$ since the permutation
matrices at each stage are i.i.d.)  We will show by a recursive
argument that each $\bar{A}^{(t)}$ has the form $\eta_t I + \nu_t
\bfone \bfone^T$, for some parameters $\eta_t$ and $\nu_t$.  Note that
for $\bar{A}^{(t)}$ of this form, we have that $P^T \bar{A}^{(t)} P =
\bar{A}^{(t)}$, a property that is crucial to our analysis.  We derive
a stationary iteration between the successive pairs
$(\eta_{t-1},\nu_{t-1})$ and $(\eta_t,\nu_t)$, and reveal the
convergence properties of RPCD by analyzing the $2 \times 2$ matrix
that relates successive pairs.

We start with a technical lemma.
\begin{lemma}
\label{lemma:form}
Given any matrix $Q \in \mathbb{R}^{n\times n}$ and permutation matrix
$P$ selected uniformly at random from the set of all permutations
$\Pi$, we have
\begin{equation}
\label{eq:form}
B:= \E_P[P Q P^T] = \tau_1 I + \tau_2 \bfone\bfone^T,
\end{equation}
where
\begin{equation}
\tau_2 = \frac{\bfone^T Q \bfone - \trace(Q)}{n(n-1)},\quad
\tau_1 = \frac{\trace (Q)}{n} - \tau_2.
\label{eq:alpha}
\end{equation}
\end{lemma}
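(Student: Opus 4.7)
The plan is to show that $B = \E_P[PQP^T]$ has constant diagonal entries and constant off-diagonal entries, and then match this structure to the claimed form $\tau_1 I + \tau_2 \bfone\bfone^T$.

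First I would write out a single entry of the conjugated matrix explicitly. If $P$ is the permutation matrix associated with the bijection $\pi:\{1,\dots,n\}\to\{1,\dots,n\}$, then $(PQP^T)_{ij} = Q_{\pi^{-1}(i),\pi^{-1}(j)}$. Taking expectations over a uniformly random $\pi$ yields $B_{ij} = \E[Q_{\pi^{-1}(i),\pi^{-1}(j)}]$, and I would split into the two cases $i=j$ and $i\neq j$.

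For the diagonal case $i=j$, the index $\pi^{-1}(i)$ is uniform on $\{1,\dots,n\}$, so $B_{ii} = \frac{1}{n}\sum_{k=1}^n Q_{kk} = \trace(Q)/n$, the same value for every $i$. For the off-diagonal case $i\neq j$, the pair $(\pi^{-1}(i),\pi^{-1}(j))$ is uniform over ordered pairs of distinct indices in $\{1,\dots,n\}$, of which there are $n(n-1)$, so $B_{ij} = \frac{1}{n(n-1)}\sum_{k\neq l} Q_{kl} = \frac{\bfone^T Q \bfone - \trace(Q)}{n(n-1)}$, again the same value for every $i\neq j$.

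Since $B$ has a common value on the diagonal and another common value off the diagonal, it admits the representation $B = \tau_1 I + \tau_2 \bfone\bfone^T$, where $\tau_2$ equals the common off-diagonal value and $\tau_1 + \tau_2$ equals the common diagonal value. Reading off from the two formulas above then gives $\tau_2 = (\bfone^T Q \bfone - \trace(Q))/(n(n-1))$ and $\tau_1 = \trace(Q)/n - \tau_2$, matching \eqref{eq:alpha}. There is no real obstacle: the proof is a combinatorial bookkeeping argument, and the only thing to be careful about is the count $n(n-1)$ of ordered distinct pairs and the fact that $Q$ is not assumed symmetric, so one must use $\bfone^T Q \bfone$ (the full sum of entries) rather than a one-sided sum.
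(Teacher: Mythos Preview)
Your proposal is correct and follows essentially the same approach as the paper's proof: both compute the diagonal and off-diagonal entries of $B$ separately by observing that under a uniform permutation the relevant index (or ordered pair of distinct indices) is itself uniform, giving $B_{ii}=\trace(Q)/n$ and $B_{ij}=(\bfone^T Q\bfone-\trace(Q))/(n(n-1))$, and then read off $\tau_1,\tau_2$ from the form $\tau_1 I+\tau_2\bfone\bfone^T$. Your use of the explicit formula $(PQP^T)_{ij}=Q_{\pi^{-1}(i),\pi^{-1}(j)}$ is slightly cleaner notation than the paper's counting of permutations, but the argument is the same.
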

\begin{proof}
For any $P\in \Pi$, if $P$ shifts the $i$th position to the $j$th
position, then $(PQP^T)_{jj} = Q_{ii}$. Since the probability of
taking any permutation from $\Pi$ is identical, we have that
\begin{equation*}
\cP((PQP^T)_{jj} = Q_{ii}) = \frac{1}{n}, \quad \forall i,j \in
\{1,\dotsc, n\}
\end{equation*}
(where $\cP(\cdot)$ denotes probability). Therefore, each diagonal
entry $B$ is the average over all diagonal entries of $Q$.
\begin{equation*}
B_{jj} = \frac{\sum_{i=1}^n Q_{ii}}{n}, \quad j= 1,2,\dotsc, n.
\end{equation*}
Consider permutations that shift the $i$th and the $j$th entries to
the $k$th and the $l$th positions, respectively, that is,
\begin{equation} \label{eq:p22}
(PQP^T)_{kl} = Q_{ij}.
\end{equation}
Note that we always have that $i\neq j \Rightarrow k\neq l$ because
permutations are bijections from and to $\{1,\dotsc,n\}$. Thus, there
are $(n-2)!$ permutations in $\Pi$ with the property \eqref{eq:p22}.  Under the
same reasoning as before, each off-diagonal entry of $B$ is the
average of all off-diagonal entries of $Q$.
\begin{equation*}
B_{kl} = \frac{\sum_{1\leq i,j \leq n, i\neq j} Q_{ij}}{n(n-1)},
\quad k,l \in \{1,2,\dotsc, n\}, \; k \neq l.
\end{equation*}
Finally, we obtain \eqref{eq:alpha} by noting that $B_{ii} = \tau_1 +
\tau_2$, while $B_{ij} = \tau_2$ for $i \neq j$.
\end{proof}


We have immediately from Lemma~\ref{lemma:form} that
\begin{equation} \label{eq:DF}
\E_P (P^TC^TCP) = d_1 I + d_2 \bfone \bfone^T, \quad
\E_P (P^T C^T \bfone \bfone^T CP) = m_1 I + m_2 \bfone \bfone^T,
\end{equation}
where
\begin{subequations} \label{eq:df}
\begin{alignat}{2}
d_2 &=
\frac{\bfone^T C^T C \bfone - \mbox{trace}(C^T C)}{n(n-1)} &&= 
\frac{\|C \bfone\|_2^2 - \| C \|_F^2 }{n(n-1)}
\label{eq:d2}\\
d_1
&= \frac{\mbox{trace}(C^T C)}{n} - d_2 &&=
\frac{\| C\|_F^2}{n} - d_2
\label{eq:d1}\\
m_2 &=\frac{(\bfone^T C \bfone)^2 - (\bfone^T C)(C^T\bfone)}{n(n-1)} &&=
\frac{(\bfone^T C \bfone)^2 - \| C^T \bfone \|_2^2 }{n(n-1)}
\label{eq:f2}\\
m_1
&= \frac{(\bfone^TC)(C^T\bfone)}{n} - m_2 &&=
\frac{\|C^T\bfone\|_2^2}{n-1} - \frac{(\bfone^T C \bfone)^2}{n(n-1)}.
\label{eq:f1}
\end{alignat}
\end{subequations}
Note that for \eqref{eq:f2} and \eqref{eq:f1} we used the property
$\text{trace}(AB) = \text{trace}(BA)$.

The following theorem reveals the relationship between successive
matrices in the sequence $\bar{A}^{(0)}, \bar{A}^{(1)}, \dotsc$.
\begin{theorem} \label{th:expectedobj}
Consider solving \eqref{eq:q} with the matrix $A$ defined in
\eqref{eq:Ainvariant} using RPCD.  For $\bar{A}^{(t)}$ defined in
\eqref{eq:recursive}, with $\bar{A}^{(0)}=A$, we have
\begin{equation}
\label{eq:Abar}
\bar{A}^{(t)} = \eta_{t} I + \nu_{t} \bfone \bfone^T,
\end{equation}
where $(\eta_0,\nu_0) = (\ddd,1-\ddd)$ and 
\begin{equation}
\label{eq:system}
\left[\begin{matrix} \eta_{t+1} \\ \nu_{t+1}\end{matrix}\right]
= M
\left[\begin{matrix} \eta_{t}\\
\nu_{t}\end{matrix}\right]
= M^{t+1}
\left[\begin{matrix} \ddd \\
1-\ddd \end{matrix}\right], \quad \forall t \ge 0,
\end{equation}
where
\begin{equation} \label{eq:def.M}
M := \left[ \begin{matrix} d_1 & m_1 \\ d_2 & m_2 \end{matrix} \right],
\end{equation}
and  $d_1, d_2, m_1, m_2$ are defined in \eqref{eq:df}.
\end{theorem}
\begin{proof}
We first prove \eqref{eq:Abar} by induction.  By
\eqref{eq:Ainvariant}, it holds at $t = 0$.  Now assume it holds for
$t = k$, for some $\eta_k$ and $\nu_k$, then for $k+1$ we have from
\eqref{eq:recursive}
\begin{equation*}
\bar{A}^{(k+1)} = \E_{P}[P C^T P^T \bar{A}^{(k)} P C P^T].
\end{equation*}
Because $\bar{A}^{(k)}$ is in the form \eqref{eq:Abar}, it is
invariant to row and column permutations, that is, $P^T\bar{A}^{(k)}P
= \bar{A}^{(k)}$ for all $P \in \Pi$.  Hence,
\begin{align}
\nonumber
\bar{A}^{(k+1)} & = \E_{P}[P C^T \bar{A}^{(k)} C P^T] \\
\nonumber
& = \eta_k \E_{P}[P C^T C P^T] + \nu_k \E_P[P C^T \bfone \bfone^T C
P^T] \\
\nonumber
& = \eta_k (d_1 I + d_2 \bfone \bfone^T)+ \nu_k (m_1 I + m_2 \bfone \bfone^T) \\
\label{eq:combination}
&= (\eta_k d_1 + \nu_k m_1)I + (\eta_k d_2 + \nu_k m_2) \bfone \bfone^T,
\end{align}
giving the result.
\end{proof}

We obtain a result for the expected value of $f$ after $\ell$ epochs
by taking the expectation as in \eqref{eq:fRGS.x0}, showing that the
sequence of expected function values at the end of each epoch is
governed by the behavior of the sequence $\{ (\eta_{\ell},\nu_{\ell})
\}$.
\begin{theorem} \label{th:RPCD.f}
Consider solving \eqref{eq:q} with the matrix $A$ defined in
\eqref{eq:Ainvariant} using RPCD.  Then, using the notation of
Theorem~\ref{th:expectedobj}, we have
\[
\E_{P_1,P_2,\dotsc,P_{\ell}} \, f(x^{\ell n}) = \frac12 \left( \eta_\ell \|x^0\|^2 +\nu_\ell (\bfone^Tx^0)^2 \right) \le   \left\|
\left[ \begin{matrix} \eta_{\ell} \\ \nu_{\ell}  \end{matrix} \right] \right\| \max \left( \|x^0\|^2, (\bfone^Tx^0)^2 \right).
\]
\end{theorem}
\begin{proof}
  The result is obtained by taking expectations with respect to
  $P_{\ell}, P_{\ell-1}, \dotsc, P_1$ in \eqref{eq:fRGS.invariant},
  and using the definition of $\bar{A}^{(t)}$ (with $t=\ell$) together
  with
  Theorem~\ref{th:expectedobj}.
\end{proof}

Figure~\ref{fig:expected} plots the expected value from
Theorem~\ref{th:RPCD.f} against the value of $f(\xRPCD^{\ell n})$
obtained from \eqref{eq:fRGS.invariant} for particular random choices
of $x^0$ and the permutation matrices $P_1,P_2, \dotsc, P_{\ell}$,
showing that the estimate in this one instance tracks the expected
value closely. (This behavior is typical.)


\begin{figure}
	\centering
	\includegraphics[width=0.7\linewidth]{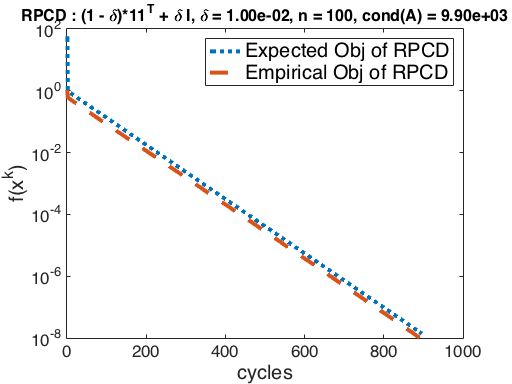}
	\caption{Empirical objective value and expected objective value of
	RPCD.}
	\label{fig:expected}
\end{figure}

\subsection{Convergence of the Two-Parameter Recurrence} \label{sec:2conv}

It is evident from Theorems~\ref{th:expectedobj} and \ref{th:RPCD.f}
(and Gelfand's formula) that the asymptotic convergence of the
expected value of $f$ is governed by $\rho(M)$, which, because of
definitions \eqref{eq:def.M} and \eqref{eq:df}, is a function of
$\ddd$ and $n$.  In Figure~\ref{subfig:deltalarge} of
Section~\ref{sec:pinvA} and Table~\ref{tab:table1} of
Section~\ref{sec:computations}, we see that this rate is significantly
better than those obtained for RCD and CCD when $\delta$ is not too
close to zero (that is, $\delta \ge .2$). In this section, we estimate
the convergence rate of RPCD for $\delta$ close to zero, showing that
in this regime, it is close to the rate of approximately $1- 2\ddd$
obtained by RCD \eqref{eq:RCDrate.R}, and much faster than the rate of
CCD discussed in \eqref{eq:CCDup}, \eqref{eq:CCDlo}, which is $1-c
\ddd/n^2$, for some modest value of $c$.

We start with a rigorous bound on the convergence rate for the
sequence $\{ (\eta_{\ell}, \nu_{\ell} ) \}$, without resorting to
eigenvalue calculations for $M$.  The details of bounding the elements
of $M$ ($d_1$, $d_2$, $m_1$, and $m_2$ from \eqref{eq:df}) as
functions of $\ddd$ and $n$ are shown in
Appendix~\ref{app:computation}. For the case of $\ddd \in [0,0.4]$ and
$n \ge 10$, the formulas \eqref{eq:c.}  yield the following bounds:
\begin{align*}
	0 \le d_1 &\le 1-2 \ddd +3.6 \ddd^2,
	\\
|m_2| & \le  .05 \ddd^2,\\
0 \le d_2 &\le 1 - 2\ddd + 3.2 \ddd^2,\\
|m_1| & \le  .15 \ddd^2.
\end{align*}
By appealing to Theorems~\ref{th:expectedobj} and \ref{th:RPCD.f}, we
obtain our main result.
\begin{theorem} \label{th:rpcd.main}
Consider solving \eqref{eq:q}, \eqref{eq:Ainvariant} with $\ddd \in
[0, 0.4]$ and $n\ge 10$ using RPCD. Then, using the notation of
Theorem~\ref{th:expectedobj}, we have that
\begin{equation} \label{eq:rec}
|\eta_{\ell}| \le (1-2\ddd+4\ddd^2)^{\ell-1} \delta, \quad
|\nu_{\ell}| \le (1-2\ddd+4\ddd^2)^{\ell-1} \delta, \quad \forall \ell \ge 1.
\end{equation}
Thus, we have the following bound on the convergence of the expected
value of the function:
\[
\E_{P_1,P_2,\dotsc,P_{\ell}} \, f(x^{\ell n}) \le \frac12 (1-2\ddd+4\ddd^2)^{\ell-1} \left(\|x^0\|^2 + (\bfone^Tx^0)^2 \right) \ddd, \quad \forall \ell \ge 1.
\]
\end{theorem}
\begin{proof}
Since $(\eta_0,\nu_0) = (\ddd,1-\ddd)$, we have from \eqref{eq:system}
and using $\ddd \in [0,0.4]$ that
\[
\left[ \begin{matrix} |\eta_1| \\  |\nu_1| \end{matrix} \right] \le
\left[ \begin{matrix} d_1 & |m_1| \\ d_2 & |m_2| \end{matrix} \right] 
\left[ \begin{matrix} \ddd  \\ (1-\ddd) \end{matrix} \right] \le
\left[ \begin{matrix} (1-2\ddd+3.6 \ddd^2) \ddd + .15 \ddd^2 (1-\ddd) \\
(1-2\ddd+3.2 \ddd^2) \ddd +  .05\ddd^2 (1-\ddd) \end{matrix} \right] 
\le
\left[ \begin{matrix} (1-1.85\ddd+3.6 \ddd^2) \ddd \\
(1-1.95\ddd+3.2 \ddd^2) \ddd \end{matrix} \right] 
\le 
\left[ \begin{matrix} \ddd \\ \ddd \end{matrix} \right],
\]
so that \eqref{eq:rec} holds for $\ell=1$. Supposing that the bound
holds for some value of $\ell \ge 1$, we have
\begin{align*}
\left[ \begin{matrix} |\eta_{\ell+1}| \\  |\nu_{\ell+1}| \end{matrix} \right] & \le
(1-2\ddd+4\ddd^2)^{\ell-1}
\left[ \begin{matrix} (1-2\ddd+3.6 \ddd^2) & .15 \ddd^2 \\
(1-2\ddd+3.2 \ddd^2) &   .05\ddd^2  \end{matrix} \right] 
\left[ \begin{matrix} \ddd \\ \ddd \end{matrix} \right] \\
& \le
(1-2\ddd+4\ddd^2)^{\ell -1}
\left[ \begin{matrix} 
(1-2\ddd+3.75 \ddd^2) \ddd \\
(1-2\ddd+3.25 \ddd^2) \ddd
 \end{matrix} \right] \\
& \le 
(1-2\ddd+4\ddd^2)^{\ell}
\left[ \begin{matrix} 
\ddd \\ \ddd
 \end{matrix} \right],
\end{align*}
verifying that the required bound still holds at $\ell+1$, thus proving
\eqref{eq:rec}.

The final claim follows directly from Theorem~\ref{th:RPCD.f}.
\end{proof}

This result indicates a global linear rate of at worst $1-2\ddd
+4\ddd^2$, similar to the rate \eqref{eq:RCDrate.R} obtained for RCD
(identical to $O(\ddd)$) and much faster than the rate obtained for
CCD in \eqref{eq:CCDup}, \eqref{eq:CCDlo}.

By using slightly more refined estimates of the elements of $M$,
which involve not strict upper bounds as in \eqref{eq:c.} but rather
remainder terms containing higher powers of $\ddd$ and/or $1/n$, we
can obtain an estimate of $\rho(M)$. In Appendix~\ref{app:m2}, we
obtain the following estimates of $d_1$, $d_2$, $m_1$, and $m_2$:
\begin{align*}
	d_1 &= 1-2 \ddd - 2 \frac{\ddd}{n} + 2 \ddd^2 +
	O \left(\frac{\ddd^2}{n} \right) +  O(\ddd^3),
	\\
m_2 &= O\left(\frac{\ddd^2}{n}\right),\\
d_2 &= 1 - \frac{2}{n} + O(\ddd),\\
m_1 &= O\left(\frac{\ddd^2}{n}\right).
\end{align*}
By substituting these estimates into \eqref{eq:def.M} and calculating
the spectral radius $\rho(M)$ as the largest root of the
characteristic quadratic $\det (M-\lambda I)$, we obtain
\begin{equation} \label{eq:rhoM}
	\rho(M) = 1-2\ddd-\frac{2\ddd}{n} + 2 \ddd^2 +
	O \left(\frac{\ddd^2}{n} \right) + O(\ddd^3).
\end{equation}
This asymptotic rate is identical to the rate for RCD
\eqref{eq:RCDrate.R} in the $1$, $\ddd$, and $\ddd^2$, terms, and is
slightly better because of the presence of the $-2\ddd/n$ term. 

\subsection{The First Iteration} \label{sec:first}

We noted in the numerical experiments (Figures~\ref{fig:different_n}
and \ref{fig:expected}) that the decrease in $f$ over the first epoch
of RPCD is rather dramatic. In fact, after just a single {\em
  iteration}, the function value was often of order $\ddd$, for all
three variants (CCD, RPCD, and RCD). The following result supports
this observation.
\begin{theorem} \label{th:first}
Consider solving \eqref{eq:q} with the matrix $A$ defined in
\eqref{eq:Ainvariant} using RCD or RPCD with exact line search
\eqref{eq:q.exact}. Given any $x^0$, the expected function value
after a single iteration satisfies
\begin{equation}
\label{eq:first}
\E_i f(x^1)
=\frac12 \ddd \left( 1- \frac{\ddd}{n} \right) \|x^0\|^2 +
\frac12 \ddd (1-\ddd)
\left( 1-\frac{2}{n} \right) (\bfone^Tx^0)^2  \le \frac12 \ddd \|x^0\|^2 + \ddd f(x^0),
\end{equation}
where $i$ denotes the coordinate chosen for updating at the first
iteration. 
\end{theorem}
\begin{proof}
Note that $i$ is chosen uniformly at random from $\{ 1,2,\dotsc,n\}$
for both RPCD and RCD. After one step of CD, we have
\begin{align*}
x^1_i & = x^0_i - \left(x^0_i + (1-\ddd) \sum_{j \ne i} x^0_j \right) = -(1-\ddd)
\left( \sum_{j \ne i} x^0_j \right); \\
 x^1_j & = x^0_j \;\; \mbox{for $j \ne i$.}
\end{align*}
Thus from \eqref{eq:Ainvariant} we have
\begin{align}
\nonumber
f\left(x^1\right) &= \frac12 \left(x^1\right)^TA x^1 \\
\nonumber
& =
 \frac12 \ddd \left\|x^1 \right\|^2 + \frac12 \left(1-\ddd\right) \left( \sum_{j=1}^n  x^1_j \right)^2 \\
\nonumber
&= \frac12 \ddd \left[ \sum_{j \neq i} \left(x^0_j\right)^2 +
	\left(1-\ddd\right)^2 
\left( \sum_{j \ne i} x^0_j \right)^2 \right]
 + \frac12 \left(1-\ddd\right)
\left[ \sum_{j \ne i} x^0_j - (1-\ddd) \sum_{j \ne i} x^0_j
  \right]^2  \\
\nonumber
&=\frac12 \ddd \sum_{j \neq i} \left(x^0_j\right)^2 + \left( \sum_{j \ne i} x^0_j \right)^2
\left[ \frac12 \ddd (1-\ddd)^2 + \frac12 \ddd^2 (1-\ddd)
\right] \\
\label{eq:first.iter.3}
&=\frac12 \ddd \sum_{j \neq i} \left(x^0_j\right)^2 + \frac12 \ddd (1-\ddd) \left( \sum_{j \ne i} x^0_j \right)^2.
\end{align}
Since 
\begin{align*}
\E_i \sum_{j \ne i} \left(x^0_j\right)^2  & = 
\frac{n-1}{n} \sum_{j=1}^n  \left(x^0_j\right)^2 =
\frac{n-1}{n} \|x^0 \|^2, \\ 
\E_i \left( \sum_{j \ne i} x_j^0 \right)^2 & = \E_i (\bfone^Tx^0-x_i)^2 \\
&=\E_i \left( (\bfone^Tx^0)^2 - 2x_i (\bfone^T x^0) + x_i^2 \right) \\
&= (\bfone^Tx)^2 - \frac{2}{n} (\bfone^Tx^0)^2 + \frac{1}{n} \|x^0\|^2 \\
&= \left(1-\frac{2}{n} \right) (\bfone^Tx^0)^2 + \frac{1}{n} \|x^0\|^2.
\end{align*}
we have by taking expectation with respect to $i$ in
\eqref{eq:first.iter.3} that the equality in \eqref{eq:first} holds.

For the inequality in \eqref{eq:first}, we have from
\[
f(x^0) = \frac12 (x^0)^T A x^0 = \frac12 \ddd \|x^0 \|^2 + \frac12 (1-\ddd) (\bfone^Tx^0)^2 \ge \frac12 (1-\ddd) (\bfone^Tx^0)^2
\]
that
\begin{align*}
\E_i f(x^1) &= \frac12 \ddd \left( 1- \frac{\ddd}{n} \right) \|x^0\|^2
+ \frac12 \ddd (1-\ddd) \left( 1-\frac{2}{n} \right) (\bfone^Tx^0)^2 \\
& \le \frac12 \ddd \| x^0\|^2 + \frac12 \ddd (1-\ddd) (\bfone^T x^0)^2 \\
& \le \frac12 \ddd \| x^0\|^2 + \ddd f(x^0),
\end{align*}
as required.
\end{proof}

For CCD, we have from \eqref{eq:first.iter.3} with $i=1$ that
\begin{align*}
  f(x^1) &= \frac12 \ddd ( \|x^0\|^2 - (x_1^0)^2) + \frac12 \ddd (1-\ddd) \left((\bfone^Tx^0) - x_1^0 \right)^2 \\
  & \le \frac12 \ddd \|x^0\|^2 + \frac12 \ddd (1-\ddd) \left[ (\bfone^Tx^0)^2 - 2(x_1^0) (\bfone^Tx^0) \right] \\
  & \le \frac12 \ddd \left[ \|x^0\|^2 + (\bfone^Tx^0)^2 + 2 \|x^0\| (\bfone^Tx^0) \right] \\
  &= \frac12 \ddd \left[ \|x^0\| + (\bfone^T x^0) \right]^2.
\end{align*}
If $x^0$ is independent of $\ddd$, we have that $f(x^1) =
O(\ddd)$. However there is no guarantee that $f(x^1)$ is substantially
smaller than $f(x^0)$. If $x^0$ is chosen ``adversarially'' in such a
way that $| \bfone^Tx^0| \ll \|x^0\|$, we may find that $f(x^1)$ is
not much smaller than $f(x^0)$. For random choices of $x^0$, however,
we would expect a significant decrease on the first iteration, similar
to that observed for RPCD and RCD.

\section{Computational Results} \label{sec:computations}

Some comparisons between empirical rates and rates predicted from the
analysis are shown in Table~\ref{tab:table1}, for $n=100$ and
different values of $\delta$. For the empirical rates
$\rhoCCD(\ddd,x^0)$, $\rhoRCD(\ddd,x^0)$, and $\rhoRPCD(\ddd,x^0)$, we
used formulas like \eqref{eq:GS.av}, but we took the average decrease
factor {\em only over the last 10 epochs}, so as to capture the
asymptotic rates and discount the early iterations. We used the
termination criterion $f(x^{\ell n}) - f^* \le 10^{-8}$.  For the
theoretical predictions, we used $\rho(C)^2$ for CCD (as suggested by
\eqref{eq:fGS.av}), the formula $\rhoRCD(\ddd,{\text{predicted}}) =
(1- 2\ddd/(n(1 + \ddd)))^n$ for RCD (from \eqref{eq:RCDrate.R}), and
$\rho(M)$ for RPCD (from \eqref{eq:rhoM}).
We note from this table that the theoretical predictions for CCD and
RPCD are quite sharp, even for values of $\ddd$ that are not
particularly small. For RCD, the empirical results are slightly better
than predicted by the theory when $\ddd$ is large. RPCD has the best
practical and theoretical asymptotic convergence of the three
variants, with the advantage increasing as $\ddd$ increases.

\begin{table}
\begin{center}
	\begin{tabular}{@{}l@{}|llllll@{}}
	$\ddd$  & $0.80$  & $0.50$  & $0.33$  & $0.20$  & $0.10$  &
	$0.03$\\ \hline
	$\rhoCCD(\ddd,x^0)$  & $0.9340$  & $0.9924$  & $0.9971$  & $0.9988$
	& $0.9995$  & $0.9998$\\
	$\rho(C)^2$  & $0.9342$  & $0.9924$  & $0.9971$  & $0.9988$  &
	$0.9995$  & $0.9999$\\
	\hline
	$\rhoRCD(\ddd,x^0)$  & $0.3146$  & $0.4764$  & $0.5945$  & $0.7059$
	& $0.8287$  & $0.9428$\\
	$\rhoRCD(\ddd,{\text{predicted}})$ & $0.4095$  & $0.5123$  &
	$0.6081$  & $0.7161$  & $0.8336$  & $0.9434$\\
	\hline
	$\rhoRPCD(\ddd,x^0)$  & $0.1054$  & $0.3306$  & $0.4929$  &
	$0.6615$  & $0.8178$  & $0.9415$\\
	$\rho(M)$  & $0.1162$  & $0.3289$  & $0.4994$  & $0.6635$  &
	$0.8164$  & $0.9412$ \\
	\hline
\end{tabular}
\end{center}
\caption{Observed and predicted per-epoch convergence rates for CCD,
  RCD, and RPCD, for various values of $\delta$. ($n=100$ in all
  experiments.) \label{tab:table1}}
\end{table}

\section{Conclusions} \label{sec:conclusions}

Recent work has shown that the problem \eqref{eq:q} with Hessian
matrix \eqref{eq:Ainvariant} is a case that reveals significant
differences in performance between cyclic and randomized variants of
coordinate descent. Here, we provide an analysis of the performance of
random-permutations cyclic coordinate descent that sharply predicts
the practical convergence behavior of this approach, showing an
asymptotic convergence rate that at least matches (and is even
slightly better than) that obtained by a random
sampling-with-replacement scheme.


Empirically, it appears that convex quadratic instances that reveal
differences between CCD, RCD, and RPCD are quite limited in scope,
with \eqref{eq:Ainvariant} being the canonical instance and the one
whose analysis is most tractable.  In work subsequent to this paper
\citep{WriL16b}, we analyze the case of quadratic convex $f$ in which
the Hessian has the form $\ddd I + (1-\ddd) uu^T$, where $u \in \R^n$
is a vector whose elements have magnitude not too different from
$1$. By a diagonal transformation, this matrix has the form $\ddd I +
(1-\ddd) \bfone \bfone^T + \epsilon D$, where $D$ is diagonal with
elements in the range $[0,1]$ and $\epsilon \ge 0$. Our analysis in
\citep{WriL16b} builds on the approach in this paper, but is somewhat
more complex; the exact two-variable recurrence of
Theorem~\ref{th:expectedobj} becomes an approximate recurrence
involving more terms.

\section*{Acknowledgments}

We thank two referees and the Editor-in-Chief for their comments
on earlier drafts, which caused us to improve the presentation and
sharpen the results of the paper.

\bibliographystyle{IMANUM-BIB} \bibliography{cdrefs}

\appendix
\numberwithin{equation}{section}

\section{Estimating Terms in the Recurrence Matrix $M$}
\label{app:computation}

Here we first find upper and (in some cases) lower bounds for the
following quantities, for the matrix $A$ given in
\eqref{eq:Ainvariant} and the corresponding value of $C$ defined in
\eqref{eq:LD} and \eqref{eq:Cinvariant}:
\begin{equation} \label{eq:Cq}
(\bfone^T C \bfone)^2, \quad 
\|C\bfone\|^2, \quad \|
C^T \bfone\|^2, \quad \|C\|_F^2.
\end{equation}
We then use these quantities to obtain bounds on $d_1$, $d_2$, $m_1$,
and $m_2$ from \eqref{eq:df}.
We assume throughout that $n \geq 10$ and $\ddd \in [0,0.4]$.


For $\bfone^T C \bfone$, we have from \eqref{eq:LD} and
\eqref{eq:Cinvariant} that
\begin{equation*}
\bfone^T C \bfone = (1 - \ddd) (\bfone^T \bar{L}) (E^T \bfone)
= (1 - \ddd) u^T v,
\end{equation*}
where $u = \bar{L}^T \bfone $ and $v = E^T \bfone$ have the following
components:
\[
v_i = n - i, \quad i=1,2,\dotsc,n,
\]
(from \eqref{eq:LD}) and
\[
u_i = -1 + (1-\ddd)\sum_{t=0}^{n-i-1} \ddd^t = -1 + (1-\ddd) \frac{1
	- \ddd^{n - i}}{1-\ddd} = -\ddd^{n-i}, \quad i=1,\ldots,n,
\]
(from \eqref{eq:Lbar2}).
For  $\ddd \in [0,0.4]$,  we have
\begin{align*}
0 \ge \bfone^T C \bfone &= -(1-\ddd) \sum_{i=1}^n (n-i)\ddd^{n-i}\\
&= -(1-\ddd) \sum_{i=1}^{n-1} i \ddd^i \\
&= -(1-\ddd) \sum_{i=1}^{n-1} \sum_{j=i}^{n-1} \ddd^j \\
& \ge -(1-\ddd) \sum_{i=1}^{n-1}\frac{\ddd^i}{1-\ddd} \\
&= -\sum_{i=1}^{n-1} \ddd^i \\
& \ge -\frac{\ddd}{1-\ddd} \ge -2 \ddd.
\end{align*}
Therefore, we have
\begin{equation}
\label{eq:c1.0}
0 \le (\bfone^T C \bfone)^2  \le 4 \ddd^2.
\end{equation}


We next seek an upper bound for $\| C^T \bfone \|_2^2$. We have from
\eqref{eq:C.elts} that
\begin{align*}
( C^T \bfone)_j &= \sum_{i=1}^{j-1} C_{ij} + \sum_{i=j}^n C_{ij} \\
&= -(1-\ddd) \sum_{i=1}^{j-1} \ddd^{i-1} + 
(1-\ddd) \sum_{i=j}^n (\ddd^{i-j} - \ddd^{i-1}) \\
&=-(1-\ddd) \sum_{i=1}^n \ddd^{i-1} + (1-\ddd) \sum_{t=0}^{n-j} \ddd^t \\
&= -(1-\ddd^n) + (1-\ddd) \frac{1-\ddd^{n-j+1}}{1-\ddd} \\
&= \ddd^n - \ddd^{n-j+1}.
\end{align*}
It follows that
\begin{align}
\nonumber
\| C^T \bfone \|_2^2 &= \sum_{j=1}^n (\ddd^n-\ddd^{n-j+1})^2 \\
\nonumber
& \le \sum_{j=1}^n (\ddd^{n-j+1})^2 \\
\nonumber
&= \sum_{j=1}^n \ddd^{2j} \\
\nonumber 
& \le \frac{\ddd^2}{1-\ddd^2} \\
\label{eq:c1.2}
& \le 1.34 \ddd^2.
\end{align}

We now use \eqref{eq:C.elts} to compute bounds on the other quantities
in \eqref{eq:Cq}.  We have
\begin{align*}
(C \bfone)_i &= \sum_{j=1}^i C_{ij} + \sum_{j=i+1}^n C_{ij} \\
&= (1-\ddd) \sum_{j=1}^i (\ddd^{i-j}-\ddd^{i-1}) - (1-\ddd) \sum_{j=i+1}^n \ddd^{i-1} \\
&= (1-\ddd) \sum_{j=1}^i \ddd^{i-j} - (1-\ddd) n \ddd^{i-1} \\
&=(1-\ddd^i) - n(1-\ddd) \ddd^{i-1} \\
&=1-n\ddd^{i-1}+(n-1)\ddd^i.
\end{align*}
We thus obtain
\begin{align}
	\nonumber
\| C \bfone \|_2^2 &= \sum_{i=1}^n  \left[ 1-2n\ddd^{i-1} + 2(n-1) \ddd^i
+ n^2 \ddd^{2i-2} - 2n(n-1) \ddd^{2i-1} + (n-1)^2 \ddd^{2i} \right] \\
	\nonumber
&= n + \left[ -2n+2(n-1)\ddd \right]\sum_{i=1}^n \ddd^{i-1} 
+ \left[ n^2-2n(n-1)\ddd+(n-1)^2\ddd^2 \right] \sum_{i=1}^n (\ddd^2)^{i-1} \\
\label{eq:c1.10}
&=  n+ \left[ -2n+2(n-1)\ddd \right]  \frac{1-\ddd^n}{1-\ddd} +
\left[ n^2-2n(n-1)\ddd+(n-1)^2\ddd^2 \right] \frac{1-\ddd^{2n}}{1-\ddd^2}.
\end{align}
Noting that $[-2n+2(n-1)\ddd]<0$ and
$[n^2-2n(n-1)\ddd+(n-1)^2\ddd^2]>0$ for the values of $\ddd$ and $n$
of interest, and using $2n\ddd^n(1-\ddd^2) \le (2 \ddd^8)
n \ddd^2 \le .01 n \ddd^2$, we continue as follows:
\begin{align}
\nonumber
\| C \bfone \|_2^2 &\le n+ \left[ -2n+2(n-1)\ddd \right] \frac{1}{1-\ddd} 
+ \ddd^n \left[ 2n-2(n-1)\ddd \right] + \left[ n^2-2n(n-1)\ddd+(n-1)^2\ddd^2 \right]  \frac{1}{1-\ddd^2} \\
\nonumber
& \le \frac{1}{1-\ddd^2} \left[ n(1-\ddd^2) + \left[-2n+2(n-1) \ddd \right] (1+\ddd) + .01n \ddd^2 + n^2 -2n(n-1) \ddd + (n-1)^2 \ddd^2 \right] \\
\nonumber
& = \frac{1}{1-\ddd^2} \left[ n^2 -2n^2 \ddd + 2n \ddd - n - 2\ddd + \ddd^2 \left[-n+2(n-1)+.01n + (n-1)^2 \right]\right] \\
\nonumber
& \le \frac{1}{1-\ddd^2} \left[ n(n-1) (1-2\ddd) + n^2 \ddd^2  \right].
\end{align}
Thus, dividing by $n(n-1)$, and using $\ddd \in [0,0.4]$ to deduce
that $(1-\ddd^2)^{-1} \le 1+1.5 \ddd^2$, we obtain
\begin{align}
\nonumber
\frac{\| C \bfone \|_2^2}{n(n-1)} & \le
\frac{1}{1-\ddd^2} \left[(1-2\ddd) + \frac{n}{n-1} \ddd^2 \right] \\
\nonumber
& \le (1+1.5 \ddd^2) [(1-2\ddd) + 1.12 \ddd^2] \\
\nonumber
& \le (1-2\ddd) + (1.5+1.12) \ddd^2 + 2 \ddd^4 \\
\nonumber
& \le (1-2\ddd) + (1.5+1.12 + .5) \ddd^2  \\
\label{eq:c1.1}
&\le (1-2\ddd) + 3.2 \ddd^2.
\end{align}
For the corresponding lower bound, we pick up from \eqref{eq:c1.10}
and again use $[-2n+2(n-1)\ddd]<0$ and
$[n^2-2n(n-1)\ddd+(n-1)^2\ddd^2]>0$, together with
$[n^2-2n(n-1)\ddd+(n-1)^2\ddd^2] \le n^2(1+\ddd^2)$ and
$\ddd^{2n}(1+\ddd^2) \le .001 \ddd^2$, to obtain the following:
\begin{align} 
\nonumber
\| C \bfone \|_2^2 & \ge 
n + \left[ -2n+2(n-1)\ddd \right] \frac{1}{1-\ddd} + 
\left[ n^2 - 2n(n-1) \ddd + (n-1)^2 \ddd^2 \right] \frac{1}{1-\ddd^2} - \ddd^{2n} n^2 (1+\ddd^2)  \\
\nonumber
&= \frac{1}{1-\ddd^2} \left[ n(1-\ddd^2) + \left[ -2n+2(n-1)\ddd \right] (1+\ddd)+ 
\left[ n^2 -2n(n-1) \ddd + (n-1)^2 \ddd^2 \right]  \right] - .001 n^2 \ddd^2 \\
\nonumber
&= \frac{1}{1-\ddd^2} \left[ (n^2-n) + \left[ 2(n-1) -2n-2n(n-1) \right] \ddd
+ \left[ -n+2(n-1) +(n-1)^2 \right] \ddd^2 \right] - .001n^2 \ddd^2 \\
\nonumber
&= \frac{1}{1-\ddd^2} \left[ (n^2-n) + (-2n^2+2n-2) \ddd + (n^2-n-1) \ddd^2 \right] - .001 n^2 \ddd^2 \\
\nonumber
& \ge \frac{1}{1-\ddd^2} \left[ n(n-1) -2n(n-1) \ddd -2 \ddd + n(n-1) \ddd^2 - \ddd^2 -.001 n^2 \ddd^2 \right].
\end{align}
Thus, dividing by $n(n-1)$, we obtain
\begin{align}
\nonumber
\frac{\| C \bfone \|_2^2}{n(n-1)} & \ge \frac{1}{1-\ddd^2} \left[1-2\ddd + \ddd^2 - \frac{2\ddd+\ddd^2}{n(n-1)} - \frac{.001n^2}{n(n-1)} \ddd^2  \right] \\
\nonumber
& \ge \frac{1}{1-\ddd^2} \left[1-2\ddd + \ddd^2 - \frac{2.5 \ddd}{n(n-1)} - .002 \ddd^2  \right] \\
& \ge 1-2\ddd + .998 \ddd^2 - \frac{3 \ddd}{n^2}.
\label{eq:c1.1.lower}
\end{align}
Note that this lower bound is strictly positive in the regime $\ddd \in
[0,0.4]$ and $n \ge 10$.


For $\| C \|_F^2$, we obtain from \eqref{eq:C.elts} that
\begin{align}
	\nonumber
\frac{1}{(1-\ddd)^2} \|C\|_F^2
&=
\sum_{j=1}^n \left\{
\sum_{i=1}^{j-1} \ddd^{2i-2} +
\sum_{i=j}^n \left(\ddd^{2i-2j} - 2 \ddd^{2i-j-1} + \ddd^{2i-2}\right)    \right\} \\
	\nonumber
&= \sum_{j=1}^n \left\{
\frac{1-\ddd^{2j-2}}{1-\ddd^2} + \left(1-2 \ddd^{j-1} + \ddd^{2j-2}\right) 
\sum_{i=0}^{n-j} \ddd^{2i}
\right\} \\
\label{eq:Cftmp}
&= \sum_{j=1}^n \left\{
\frac{1-\ddd^{2j-2}}{1-\ddd^2} + \left(1-2 \ddd^{j-1} + \ddd^{2j-2}\right) 
\frac{1-\ddd^{2n-2j+2}}{1-\ddd^2}  
\right\} \\
	\nonumber
& \le \frac{1}{1-\ddd^2} \sum_{j=1}^n \left\{
\left(1-\ddd^{2j-2}\right) + \left(1- 2\ddd^{j-1} + \ddd^{2j-2}\right) 
\right\} \\
	\nonumber
& \le  \frac{1}{1-\ddd^2} \left\{ 2n-2 \frac{1-\ddd^n}{1-\ddd} \right\},
\end{align}
so that
\begin{align}
\nonumber
\| C \|_F^2 & \le \frac{1-\ddd}{1+\ddd}  \left\{ 2n-2 \frac{1-\ddd^n}{1-\ddd} \right\} \\ 
\nonumber
& \le \frac{1}{1+\ddd} \left\{ 2n-2n\ddd-2+2\ddd^n\right\} \\
\label{eq:Cftmp2}
& \le (1-\ddd+\ddd^2) \left\{ 2n-2n\ddd-2+.01\ddd^2\right\} \\
\nonumber
&= 2(n-1) + [-2(n-1)-2n] \ddd + [2(n-1)+2n+.01]\ddd^2 - [2n+.01] \ddd^3 + .01 \ddd^4 \\ 
\nonumber
& \le 2(n-1) + [-4n+2]\ddd + 4n \ddd^2,
\end{align}
where in \eqref{eq:Cftmp2} we used $2 \ddd^n \le  2 (\ddd^8) \ddd^2 \le .01 \ddd^2$.
It therefore follows that
\begin{equation} \label{eq:c1.3}
\frac{\|C\|_F^2}{n-1}  \le 2-4 \ddd- \frac{2\ddd}{n-1} + \frac{4n}{n-1} \ddd^2 
\le 2-4\ddd - \frac{2\ddd}{n} + 4.5 \ddd^2.
\end{equation}
It follows, using again $\ddd \in [0,0.4]$ and $n \ge 10$, that
\begin{align*}
\frac{\|C\|_F^2}{n-1}  & \le 2-4\ddd - \frac{2\ddd}{n} + 4.5 \ddd^2 \\
& \le 2-4\ddd - \frac{6\ddd}{n^2} + .998 n \ddd^2 \\
& \le 2 \left( 1-2\ddd-\frac{3\ddd}{n^2}\right) + .998 n \ddd^2 \\
& \le n \left( 1-2\ddd -\frac{3\ddd}{n^2} + .998 \ddd^2 \right)  \le \frac{\| C \bfone\|_2^2}{n-1},
\end{align*}
where we used \eqref{eq:c1.1.lower} for the final inequality. It
follows that
\begin{equation} \label{eq:c1.4}
\| C \bfone \|_2^2 - \|C \|_F^2 \ge 0.
\end{equation}

From the formulas \eqref{eq:df} together with \eqref{eq:c1.0},
\eqref{eq:c1.1}, \eqref{eq:c1.1.lower}, \eqref{eq:c1.2},
\eqref{eq:c1.3}, and \eqref{eq:c1.4}, and using $n \ge 10$, we have the
following: \begin{subequations} \label{eq:c.}
\begin{align}
\label{eq:c.d2}
0 \le d_2 &= \frac{\| C \bfone\|_2^2 - \| C \|_F^2}{n(n-1)}
\le \frac{\| C \bfone\|_2^2}{n(n-1)}
\le 1-2\ddd+3.2\ddd^2;  \\
\nonumber
0 \le d_1 &= \frac{\|C\|_F^2}{n-1} - \frac{\|C\bfone\|_2^2}{n(n-1)}\\
\nonumber
& \le \left( 2-4\ddd-\frac{2\ddd}{n} + 4.5 \ddd^2 \right) -
\left( 1-2\ddd + .998 \ddd^2 - \frac{3\ddd}{n^2} \right) \\
\nonumber
& \le 1-2 \ddd - \frac{2\ddd}{n} + \frac{3\ddd}{n^2} + 3.6 \ddd^2 \\
\nonumber
& \le 1-2\ddd  - \frac{\ddd}{n} (2-3/n) + 3.6 \ddd^2 \\
\label{eq:c.d1}
& \le 1-2\ddd + 3.6 \ddd^2; \\
\nonumber
|m_2| &= \left| \frac{(\bfone^TC\bfone)^2 - \| C^T \bfone\|_2^2}{n(n-1)} \right| \\
\nonumber
&\le   \frac{\max \left( (\bfone^TC \bfone)^2, \| C^T \bfone \|^2 \right)}{n(n-1)} \\
\label{eq:c.f2}
&\le \frac{4 \ddd^2}{n(n-1)}  \le .05 \ddd^2; \\
\nonumber
|m_1| &=  \left| \frac{\|C^T\bfone\|_2^2}{n-1} - \frac{(\bfone^T C \bfone)^2}{n(n-1)} \right| \\
\label{eq:m1tmp}
& \le \max \left( \frac{\|C^T\bfone\|_2^2}{n-1} , \frac{(\bfone^T C \bfone)^2}{n(n-1)} \right) \\
\label{eq:c.f1}
& \le \max \left( \frac{1.34}{9}, \frac{4}{90} \right) \ddd^2 \le .15 \ddd^2.
\end{align}
\end{subequations}


\section{Approximation of $d_1, d_2, m_1$, and $m_2$ for Estimating $\rho(M)$}
\label{app:m2}

From \eqref{eq:c.f2}, \eqref{eq:m1tmp}, \eqref{eq:c1.0} and
\eqref{eq:c1.2}, we have
\begin{equation}
	\label{eq:m12}
	m_1 = O\left( \frac{\ddd^2}{n} \right), \quad
	m_2 = O\left( \frac{\ddd^2}{n^2} \right).
\end{equation}

For the two terms $d_1$ and $d_2$, we first need better approximations
of $\|C\bfone\|_2^2$ and $\|C\|_F^2$.
From \eqref{eq:c1.10}, we proceed with
\begin{align}
	\nonumber
	\|C\bfone\|_2^2 &= n + (1 + \ddd + \ddd^2) \left[ -2 n + 2(n-1) \ddd \right]
	+ \left[  n^2 - 2 n (n-1)\ddd + (n-1)^2\ddd^2\right] (1 + \ddd^2)
	+ O\left(n^2 \ddd^3 \right)\\
	&= n(n-1) + \ddd \left( -2n^2 + 2n - 2 \right) + \ddd^2 \left( 2n^2
	- 2n - 1\right) + O\left( n^2 \ddd^3 \right).
	\label{eq:c1.11}
\end{align}
For $\|C\|_F^2$, we obtain from \eqref{eq:Cftmp} that
\begin{align*}
\frac{1}{(1-\ddd)^2} \|C\|_F^2
&= \frac{1}{1-\ddd^2} \sum_{j=1}^n \left\{
	2 - (\ddd^2)^{j-1} - 2 \ddd^{j-1} + \ddd^{2j-2} - \ddd^{2n-2j+2} +2\ddd^{2n-j+1} -\ddd^{2n} \right\} \\
&= \frac{1}{1-\ddd^2}
\left\{ 2n- (1+\ddd^2) -2 (1+\ddd+\ddd^2) + (1+\ddd^2) - \ddd^2 + O(\ddd^3) \right\}
\\
&= \frac{1}{1-\ddd^2} \left\{
2n-2-2\ddd-3\ddd^2  \right\} +O(\ddd^3),
\end{align*}
so that
\begin{align}
\nonumber
\| C \|_F^2 & = \frac{1-\ddd}{1+\ddd} (2n-2-2\ddd-3\ddd^2) +O(\ddd^3)\\
\nonumber
&= (1-\ddd)(1-\ddd+\ddd^2) (2n-2-2\ddd-3\ddd^2) + O(n\ddd^3) \\
\nonumber
&= (1-2\ddd+2\ddd^2)  (2n-2-2\ddd-3\ddd^2) + O(n\ddd^3) \\
\label{eq:cf}
&= (2n-2) - \ddd(4n-2) + \ddd^2 (4n-3) + O(n\ddd^3).
\end{align}
We then have from \eqref{eq:cf} and \eqref{eq:c1.11} that
\begin{subequations} \label{eq:c.new}
\begin{align}
\label{eq:c.new.d2}
d_2 &= \frac{\| C \bfone\|_2^2 - \| C \|_F^2}{n(n-1)}
= \frac{n(n-1) + O\left( n^2\ddd \right) - (2n-2)}{n(n-1)}
= 1-\frac{2}{n} + O\left( \ddd \right).\\
\nonumber
d_1
&= \frac{\|C\|_F^2}{n-1} - \frac{\|C\bfone\|_2^2}{n(n-1)}\\
\nonumber
&= 2 - 4 \ddd + 4 \ddd^2
+ \frac{-2\ddd}{n-1}
- \left(1 - 2\ddd + \frac{-2\ddd}{n(n-1)}
+ 2 \ddd^2 \right) + O \left(\frac{\ddd^2}{n} \right) +
O(\ddd^3)\\
&= 1  -2 \ddd - \frac{2\ddd}{n} + 2 \ddd^2 + O \left(\frac{\ddd^2}{n} \right)
+O(\ddd^3).
\label{eq:c.new.d1}
\end{align}
\end{subequations}

\section{Condition \eqref{eq:strongProperty} for $g(Ex)$ with $g$ Strongly Convex}
\label{app:gEx}

Suppose that $f(x)=g(Ex)$ where $g$ is strongly convex with modulus of
convexity $\sigma>0$, and $E \in \R^{m \times n}$. If $E=0$, all $x$
are optimal, so the claim \eqref{eq:strongProperty} holds
trivially. Otherwise, we have that $\sigma_{\text{min,nz}}$, the
minimum nonzero singular value of $E$, is strictly positive.

By strong convexity of $g$, there exists a unique $t^* \in \R^m$ such
that the solution set for \eqref{eq:f} has the form $\{ x \, | \,
Ex=t^* \}$. Let $P(x)$ denote the projection of any vector $x \in
\R^n$ onto this set. We have by Hoffman's Lemma \citep{Hof52} that
\[
\| x-P(x) \| \le \sigma_{\text{min,nz}}^{-1} \| E(x-P(x)) \| =
\sigma_{\text{min,nz}}^{-1} \| Ex-t^* \|.
\]
Thus by strong convexity, we have
\begin{equation} \label{eq:sc.1}
f(x) = g(Ex) \ge g(t^*) + \frac{\sigma}{2} \| E(x-P(x)) \|^2 \ge f^* + 
\frac{\sigma \sigma_{\text{min,nz}}^2}{2} \| x-P(x) \|^2.
\end{equation}
Meanwhile we have by convexity of $f$ that
\[
f^* \ge f(x) + \nabla f(x)^T(P(x)-x),
\]
so that
\[
f(x) - f^*   \le \| \nabla f(x) \| \| P(x)-x \| \le \| \nabla f(x) \| 
\left( \frac{2}{\sigma \sigma_{\text{min,nz}}^2} \right)^{1/2}(f(x)-f^*)^{1/2}.
\]
Dividing both sides by $(f(x)-f^*)^{1/2}$ we obtain
\[
\| \nabla f(x) \| \left( \frac{2}{\sigma \sigma_{\text{min,nz}}^2}
\right)^{1/2} \ge (f(x)-f^*)^{1/2} \;\; \Rightarrow
\| \nabla f(x) \|^2 \ge \frac{\sigma \sigma_{\text{min,nz}}^2}{2} (f(x)-f^*),
\]
which has the form \eqref{eq:strongProperty}.

\end{document}